\title{Global actions, $K$-theory and unimodular rows}
\author{Anthony Bak \& Anuradha S. Garge \\
\small 
Fakul{\"a}t f{\"u}r Mathematik, Universit{\"a}t Bielefeld, \\
\small
Bielefeld - 33501, Germany. \\ \small \&\\
\small
Department of Mathematics, \\
\small 
University of Mumbai, Mumbai- 400098, India.}
\date{{\small \today}} 
\newtheorem{thm}{Theorem}[section]
\theoremstyle{definition}
\newtheorem{defn}[thm]{Definition}
\theoremstyle{remark}
\theoremstyle{plain}
\newtheorem{lem}[thm]{Lemma}
\newtheorem{cor}[thm]{Corollary}
\newtheorem{prop}[thm]{Proposition}
\newtheorem{rem}[thm]{Remark}
\newtheorem{exa}[thm]{Example}
\theoremstyle{plain}
\def\B{{\operatorname{B}}}            
\def\E{{\operatorname{E}}}            
\def\EUm{{\operatorname{EUm}}}        
\def\GL{{\operatorname{GL}}}          
\def\K{{\operatorname{K}}}            
\def\P{{\operatorname{P}}}            
\def\EP{{\operatorname{EP}}}          
\def\StUm{{\operatorname{StUm}}}         
\def\SL{{\operatorname{SL}}}          
\def\St{{\operatorname{St}}}          
\def\Um{{\operatorname{Um}}}          
\def\Z{{\mathbb Z}}                   
\begin{document}
\maketitle{}

\begin{abstract}
Global actions were introduced by Bak \cite{Bak} in order to have a homotopy theory 
in a purely algebraic setting. In this paper we apply his techniques in a particular case: 
the (single domain) unimodular row global action. More precisely, we compute the 
the path connected component and fundamental group for the unimodular row global action. 
An explicit computation of the fundamental group of the (connected component of) unimodular row global action 
is closely related to stability questions in $\K$-theory. This will be shown by constructing an exact sequence  
with the fundamental group functor as the middle term and having surjective stability for the functor $\K_2$ on the left 
and injective stability for the functor $\K_1$ on the right.
\end{abstract}

\vskip3mm
\noindent
Mathematics Subject Classification 2010: 13C10, 19A13, 19B10, 19B14, 19C99. 
\vskip3mm
\noindent
Key words: Global actions, Unimodular rows, Homotopy theory, $K$-theory. 

\section{Introduction} 

Global actions were introduced by Bak, (see \cite{Bak}) in order to have the 
flexibility of combining algebraic and topological ideas. In this paper, we will 
be concentrating on single domain global actions. A single domain global action consists of 
the following data: a set together with several groups acting on it such that these group actions 
satisfy a certain compatibility condition. There is a well-defined notion of homotopy in this setting and 
here we show that this circle of ideas can be applied very effectively in the following situation: 
given an associative ring $R$ with unity, a unimodular row of length $n$ over $R$ 
is by definition, an $n$-tuple of the form $v = (v_1, \ldots, v_n)$, with $v_i \in R$ 
such that there exists $w = (w_1, \ldots, w_n)$, $w_i \in R$ with $v \cdot w^t = \langle v, w \rangle := 
\sum_i v_iw_i = 1$. The set of all unimodular rows of length $n$ over $R$ is denoted by $\Um_n(R)$. 
The action of the general linear group $\GL_n(R)$ (and hence its elementary subgroup 
$\E_n(R)$) on $\Um_n(R)$ allows one to define in a natural way a single domain global action structure
on $\Um_n(R)$.

 The aim of this paper is to investigate $\pi_0$ and $\pi_1$ of this single domain global action, 
and to show that both objects are closely related to stability questions in algebraic $K$-theory.
 An algebraic description for $\pi_0$ is easy to formulate and prove: 
                   $$\pi_0(Um_n(R)) = Um_n(R)/E_n(R),$$ 
where the object on the right denotes the orbit space of the action of $\E_n(R)$ on $\Um_n(R)$ with base point 
the orbit of $e_1=(1,0,...,0)$ in $\Um_n(R).$

   An algebraic description of $\pi_1(Um_n(R))$ is more difficult and can defined for the connected component of the base point 
of this global action, which we denote by $\EUm_n(R).$  
Let $\St_n(R)$ denote the Steinberg group and let 
$\theta_n: \St_n(R) \rightarrow \GL_n(R)$ denote the standard group homomorphism defined by sending generators
$X_{ij}(r)$ ($i \neq j$) of the Steinberg group to the elementary matrices $E_{ij}(r)$ ($i \neq j$). 
(Note here that for $i \neq j$ if $e_{ij}$ denotes the $n \times n$ matrix whose $(i, j)$-th entry is $1$ and all 
other entries are $0$, then for $r \in R$, let $E_{ij}(r)= I_n + re_{ij}$, where $I_n$ denotes the $n \times n$ 
identity matrix.) Set: 
\begin{itemize}
\item[1.] $\P_n(R)= \{\sigma \in \GL_n(R)| e_1\sigma = e_1\}$.

\item[2.] $\EP_n(R)= P_n(R) \cap \E_n(R)$.

\item[3.] $\widetilde{P_n}(R)$ {~\rm preimage of~} $\EP_n(R)~{\rm in}~\St_n(R):= \theta_n^{-1}(\EP_n(R))$.

\item[4.] $\B_n(R)$ is a certain normal subgroup of $\widetilde{P_n}(R)$ to be defined in Section $\S 3.$ 
\end{itemize} 

Then, one has $$\pi_1(\EUm_n(R)) = \widetilde{P_n}(R)/\B_n(R).$$ 

  The relationship of $\pi_0$ and $\pi_1$ of $\Um_n(R)$ to stability in algebraic $\K$-theory is expressed by two 
short exact sequences, one with $\pi_1$ as its middle term and the other with $\pi_0$ as its middle term. The sequence 
with $\pi_1$ has surjective stability for the functor $\K_2$ on the left and injective stability for the functor 
$\K_1$ on the right. The sequence with $\pi_0$ has surjective stabilty for the functor $\K_1$ on the left and 
injective stability for the functor $\K_0$ on the right. Together,these short exact sequences are equivalent to 
the $8$-term exact sequence: 
$$(\K_{2,n}(R))_2 \rightarrow \K_{2,n}(R) \rightarrow \pi_1(\EUm_n(R)) \rightarrow \K_{1,n-1}(R)/(\K_{1,n-1}(R))_2  
\rightarrow $$ $$\K_{1,n}(R) \rightarrow  \pi_0(\Um_n(R)) \rightarrow \K^s_{0,n-1}(R) \rightarrow \K^s_{0,n}(R).$$
of pointed sets. By definition, $$(\K_{2,n}(R))_2 = \K_{2,n}(R) \cap (\widetilde{P_n}(R))_2$$ 
and contains the $${\rm image}(\K_{2,n-1}(R) \rightarrow \K_{2,n}(R))$$ (see (\S 5));
            $$(\K_{1,n-1}(R))_2 = (\GL_{n-1}(R) \cap (\EP_{n}(R))_2)/\E_{n-1}(R),$$
where $(\EP_n(R))_2$ is a normal subgroup of $\EP_n(R)$ (see (\S 5)), which contains $\E_{n-1}(R)$. 
Let $\K^s_{0,m}(R)$ be the set of all isomorphism classes of finitely generated projective modules $P$ such that for some 
$r$ (depending on $P$) $P \oplus R^r = R^{m+r}$. The base point of $K^s_{0,m}(R)$ is the isomorphism class of $R^m$.

  The first $3$ terms of the exact sequence above, starting from the left, come equipped with group structures and 
the maps between them are group homomorphisms. So this much of the sequence is an exact sequence of groups.  
Suppose that $\E_{n-1}(R)$ and $\E_n(R)$ are normal in $\GL_{n-1}(R)$ and $\GL_n(R)$ respectively. Then $\K_{1,n-1}(R)$ 
and $\K_{1,n}(R)$ are groups and it turns out that $(\K_{1,n-1}(R))_2$ is a normal subgroup of 
$\rm{ker}(\K_{1,n-1}(R) \rightarrow \K_{1,n}(R))$ and that the map 
$\pi_1(\Um_n(R)) \rightarrow \K_{1,n-1}(R)/(\K_{1,n-1}(R))_2$ has as image the group
$[{\rm ker}(\K_{1,n-1}(R) \rightarrow \K_{1,n}(R))]/(\K_{1,n-1}(R))_2$ and is a group homomorphism to this group. 
So in this case, the first $5$ terms behave like an exact sequence of groups. 
It is an interesting problem to find group structures
on the remaining objects so that the entire sequence behaves like an exact sequence of groups. 

Assuming the ring $R$ is commutative and noetherian of finite Krull dimension $d$ and $n$ is sufficiently large 
relative to $d$, van der Kallen \cite{vdK1}, \cite{vdK2} has found a group structure on $\pi_0(\Um_n(R))$, but 
has shown that the map $\GL_n(R) \rightarrow \pi_0(\Um_n(R))$ is not always a group homomorphism. 
On the other hand, Ravi Rao and van der Kallen \cite{Rao-vdk} have found (nontrivial) examples where it is a 
group homomorphism. In these examples, we get a $6$-term sequence which behaves like an exact sequence of groups. 
An interesting problem is to find group structures on the $\K^s_{0,i}(R)$
such that the maps involving these groups in the sequence are group homomorphisms. 

The rest of the paper is organized as follows: Section $2$ gives basic defintions and many relevant 
examples of global actions. Section $3$ describes the notion of homotopy for global actions and in Section $4$
we give the details on simply connected coverings of global actions: first a global-action theoretic construction
and then an algebraic one. Universality of the simply connected covering then implies that these two constructions
are isomorphic. With this one computes the fundamental group of the elementary unimodular row global action.  
Section $5$ constructs the exact sequence mentioned in the introduction and deduces an interesting 
corollary on the vanishing of the fundamental group, as predicted by algebra. 

\section{Preliminaries}

\subsection{Global actions} 

In this section, we recall from \cite{Bak} the definition of a global action, a single 
domain global action and their morphisms, and  provide some examples. 
We begin with the definition of a group acting on a set. 

\begin{defn}
If $G$ is a group and $X$ is a set, then a (right) {\it group action} of $G$ on $X$
is a function $X \times G \rightarrow X$, denoted by  
$(x, g) \mapsto x \cdot g$, such that:
\begin{itemize}
\item[1.] $x \cdot e = x$, for all $x \in X$, where $e$ is the identity
of the group $G$. 
\item[2.] $x \cdot (g_1 g_2) = (x \cdot g_1) \cdot g_2$, for all 
$x \in X$ and $g_1, g_2 \in G$. 
\end{itemize}

Such a group action will be denoted by $X \curvearrowleft G$. 
\end{defn}

\begin{defn}
Let $X, Y$ be sets with groups $G, H$ acting on them respectively. 
A {\it morphism of group actions}, $(\psi, \varphi): X \curvearrowleft 
G \rightarrow Y \curvearrowleft H$,
consists of a function $\psi: X \rightarrow Y$ and a homomorphism of groups 
$\varphi:G \rightarrow H$  
such that $\psi(x \cdot g) = \psi(x) \cdot \varphi(g)$. 
\end{defn}

\begin{defn}\label{defn:global}
A {\it global action} $A$ consists of a set $X_A$ (called the underlying set of $A$) together with:
\begin{itemize}
\item[1.] An indexing set $\Phi_A$, having a reflexive relation $\leq$ on it. 
\item[2.] A family $\{(X_A)_{\alpha} \curvearrowleft (G_{A})_{\alpha}~|~\alpha \in \Phi_A \}$ of 
group actions on subsets $(X_A)_{\alpha}$ of $X_A$. The $(G_{A})_{\alpha}$ are called the 
local groups of the global action. 
\item[3.] For each pair $\alpha \leq \beta$ in $\Phi_A$, a group homomorphism, 
$$(\theta_{A})_{\alpha \beta}:(G_{A})_{\alpha} \rightarrow (G_{A})_{\beta},$$  called a structure 
homomorphism such that: 
\begin{itemize}
\item[(a)]The groups $(G_{A})_{\alpha}$ 
leave $(X_A)_{\alpha} \cap (X_A)_{\beta}$ invariant. 
\item[(b)] The pair   
$$({\rm inclusion}, ({\theta_A})_{\alpha \beta}): ((X_A)_{\alpha} \cap (X_A)_{\beta}) \curvearrowleft (G_{A})_{\alpha}
\rightarrow (X_A)_{\beta} \curvearrowleft (G_{A})_{\beta}$$ 
is a morphism of group actions. (This will be called the compatibility condition). 
\end{itemize}
\end{itemize}
\end{defn}

\begin{defn}
A global action $A$ is said to be a {\it single domain global action} if 
$(X_A)_{\alpha} = X_A$, for all $\alpha \in \Phi_A$. 
\end{defn}

\begin{rem}
For simplicity of notation whenever only one global action is involved, 
we shall drop the suffix $A$ everywhere in the definition and 
write $X, G_{\alpha}, X_{\alpha}, \theta_{\alpha \beta}$ instead. 
\end{rem}

\begin{defn}\label{defn:quotient} 
Let $G$ be a group and let $\Phi$ be an index set (equipped with a reflexive relation) 
for a family $G_{\alpha}$ $(\alpha \in \Phi)$
of subgroups of $G$. One defines a single domain global action $A$ from this data, by letting 
$X = G$ and letting each $G_{\alpha}$ act on $X$ by right multiplication. If $H$ denotes 
a subgroup of $G$, then one can make the space $G/H$ of right cosets $Hg$ of $H$ in $G$ into 
a single domain global action by letting each $G_{\alpha}$ act on $G/H$ in the obvious way, i.e.
$(Hg)g_{\alpha} = Hgg_{\alpha}$, for all $g_{\alpha} \in G_{\alpha}$. 

\end{defn} 

We recall the definition of a morphism between global actions from \cite{Bak}.
To do this one requires the notion of a local frame, which is defined below. 

\begin{defn}
Let $A$ be a global action. Let $x \in X_{\alpha}$ be some point in 
a local set of $A$. A {\it local frame} at $x$ in $\alpha$ or an $\alpha$-frame at $x$ is a 
finite subset, say $\{ x = x_0, \ldots, x_p \}$ of $X_{\alpha}$ such that 
$G_{\alpha}$-action on $X_{\alpha}$ is transitive on 
$x_0, \ldots, x_p$ i.e., for each $j$, $1 \leq j \leq p$, there exists $g_j \in G_{\alpha}$ such that 
$x_0 \cdot g_j = x_j$. 
\end{defn}

\begin{defn}
If $A$ and $B$ are global actions, with underlying sets $X, Y$ and index sets $\Phi, \Psi$ respectively, 
a {\it morphism of global actions} is a function $f: X \rightarrow Y$
which preserves local frames. We shall denote such a morphism by $f:A \rightarrow B$. 
More precisely, if $x_0, \ldots, x_p$ is an 
$\alpha$-frame at $x = x_0$,
then $f(x_0), \ldots, f(x_p)$ is a $\beta$-frame at 
$f(x) = f(x_0)$ for some $\beta \in \Psi$. 
\end{defn}

\begin{exa}
Let $A$ be a global action. Then the identity function from the underlying set of $A$ to itself 
is a morphism of global actions. 
\end{exa}

\subsection{Important examples of global actions} 

  We give below some examples of global actions by describing their underlying set, indexing set, 
local sets and local groups. It is easy to check that the compatibility condition holds. (See \cite{Bak}). 

\begin{itemize}
\item{\bf The line action:} The {\it line action}, denoted by $L$ is a global action with underlying set 
$X = \Z$ and indexing set $\Phi = \Z \cup \{*\}$. Let the only relations in $\Phi$ be $* \leq n$, for all $n\in \Z$ and 
$n \leq n$ for all $n \in \Z$. The local sets are $X_n = \{n, n+1\}$ if $n \in \Z$ and $X_{*} = \Z$. 
Let the local groups be $G_n = \Z/2\Z$, if $n \in \Z$, $G_{*} = 1$ and let 
$\{n, n+1 \} \curvearrowleft G_n$ be the group action such that the non-trivial element 
of $G_n$ exchanges the elements $n, n+1$. Let $\theta_{* \leq n}: \{1\} \rightarrow G_n$ denote the unique 
group homomorphism. 

\item{\bf The general linear global action:} 

Given $n \geq 3$, let $J_n = ([1, n] \times [1, n]) \setminus \{ (i,i)~|~1 \leq i \leq n \}$ 
i.e. the cartesian product of the set $\{1, 2, \ldots, n\}$ with itself with the diagonal removed. 

A subset $\alpha \in J_n$ is called {\it nilpotent} if the following conditions hold:
\begin{itemize}
\item If $(i, j) \in \alpha$, then $(j, i) \notin \alpha$. 
\item If $(i, j), (j, k) \in \alpha$, then $(i, k) \in \alpha$. 
\end{itemize}

Note that the empty set is a nilpotent subset and that the intersection of nilpotent subsets is nilpotent. 
Let $R$ denote an associative ring with unity. 
The {\it general linear global action}, which we denote by $\GL_n(R)$ has underlying set $\GL_n(R)$, the general linear group. 
The index set $\Phi_n$ is the set of all nilpotent subsets $\alpha$ of $J_n$. 
We give $\Phi_n$ the partial ordering defined by $\alpha \leq \beta \Leftrightarrow 
\alpha \subseteq \beta$. For all $\alpha \in \Phi_n$, let the local set $(X_{\GL_n(R)})_{\alpha} = \GL_n(R)$. 
For all $\alpha \in \Phi$, let the local group ${\GL_n(R)}_{\alpha}$ be the subgroup of 
$\GL_n(R)$ consisting of all matrices whose diagonal coefficients are 
$1$, whose nondiagonal coefficients are $0$ for coordinates $(i, j) \not \in \alpha$ and arbitrary for coordinates $(i, j) \in \alpha$. 
This means that the empty subset of $\Phi_n$ is assigned the trivial subgroup of $\GL_n(R)$.
Clearly $(\GL_n(R))_{\alpha} \cap (\GL_n(R))_{\beta} = (\GL_n(R))_{\alpha \cap \beta}$. 
Thus the assignment $$\Phi_n \rightarrow {\rm~subgroups~ of ~ \GL_n(R)},$$ sending 
$$\alpha \rightarrow (\GL_n(R))_{\alpha}$$ preserves not only partial orderings i.e. $\alpha \leq \beta \implies 
(\GL_n(R))_{\alpha} \subset (\GL_n(R))_{\beta}$, but also intersections i.e. 
$(\GL_n(R))_{\alpha \cap \beta} = (\GL_n(R))_{\alpha} \cap (\GL_n(R))_{\beta}$. It is straight forward and 
easy to verify that if we assign to each pair $\alpha \leq \beta \in \Phi_n$ the natural inclusion 
$\varphi_{\alpha \beta}: (\GL_n(R))_{\alpha} \rightarrow (\GL_n(R))_{\beta}$ then we get a (single domain) global action.
The intersection property is not needed here. It will be used later to establish the covering property in the sense 
of the Steinberg extension.

It is not difficult to show that $(\GL_n(R))_{\alpha}$ is 
generated by all elementary matrices $E_{ij}(r)$, where $(i, j) \in \alpha$ and $r \in R$. 
We recall the definition of an elementary matrix. If $(i, j) \in J_n$, let 
$e_{ij}$ denote the $n \times n$ matrix whose $(i, j)$-th entry is $1$ and all other 
entries are $0$. For $r \in R$, let $E_{ij}(r)= I_n + re_{ij}$, where $I_n$ denotes the $n \times n$ identity matrix. 

The subgroups $(\GL_n(R))_{\alpha}, \alpha \in \Phi$ are known in the literature as the {\it standard unipotent subgroups} 
of $\GL_n(R)$. Since any elementary matrix is contained in some $(\GL_n(R))_{\alpha}$ and since each 
$(\GL_n(R))_{\alpha}$ is generated by elementary matrices, it follows by definition that the $(\GL_n(R))_{\alpha}$
generate the elementary subgroup $\E_n(R)$ of $\GL_n(R)$.

\item{\bf The elementary global action:} 
 The {\it elementary global action} has underlying set $\E_n(R)$, the elementary group. The indexing set 
as well as the local groups are the same as those for the 
general linear global action. Each local set is the whole $\E_n(R)$.

\item{\bf The special linear global action:} 
Suppose $R$ is commutative. The {\it special linear global action} has underlying set $\SL_n(R)$, the special linear group.  
The indexing set as well as the local groups are the same as those for the 
general linear global action. Each local set is the whole $\SL_n(R)$.  
\end{itemize} 

Abusing notation, we shall let $\GL_n(R), \E_n(R)$ and $\SL_n(R)$ denote repectively the global actions defined above. 
Clearly, the canonical inclusions $\E_n(R) \rightarrow \GL_n(R)$ and when $R$ is commutative, $\E_n(R) \rightarrow \SL_n(R) 
\rightarrow \GL_n(R)$ are morphisms of global actions. 

 Before we begin describing the Steinberg global action, we recall the 
definition of the Steinberg group itself from \cite{Mil}, \S 5. 

Recall that elementary matrices satisfy the property 

\begin{itemize}
\item $E_{ij}(r)E_{ij}(s) = E_{ij}(r+s)$, for all $r, s \in R,$
\end{itemize}

and that the following commutator formulae hold:  

\begin{itemize}
\item $[E_{ij}(r)~~E_{kl}(s)] = 1$, if $j \neq k, i \neq l$, $r, s 
\in R$.   
\item $[E_{ij}(r)~~E_{jl}(s)] = E_{il}(rs)$, if $i \neq l$, $r, s \in R$. 
\end{itemize}

The {\it Steinberg group} $\St_n(R)$, associated to a ring $R$ is the free group defined by the generators 
$X_{ij}(r)$, $r \in R, (1 \leq i, j \leq n, i \neq j)$ subject to exactly the same relations above with $E_{ij}$ replaced by $X_{ij}$. 
Thus, the Steinberg group is defined as a quotient 
$\mathfrak{F}/\mathfrak{N}$, where $\mathfrak{F}$ denotes the free group 
generated by the symbols $X_{ij}(r), r \in R$ and $\mathfrak{N}$ denotes the smallest 
normal subgroup of $\mathfrak{F}$ modulo which the above relations are valid. 
The assignment $X_{ij}(r) \rightarrow E_{ij}(r)$ sends the relations between the generators of $\St_n(R)$ into valid identities  
between elementary matrices.

\begin{itemize} 
\item{\bf The Steinberg global action} 
The Steinberg global action has underlying set $\St_n(R)$, the Steinberg group. 
The indexing set $\Phi_n$ is the same as that of the general linear global action. 
For all $\alpha \in \Phi_n$ the local set $(X_{\St_n(R)})_{\alpha} = \St_n(R)$ and the 
local group $(\St_n(R))_{\alpha} = \langle X_{ij}(r)~|~(i, j) \in \alpha, r \in R \rangle$. 
If we assign to each pair $\alpha \leq \beta \in \Phi_n$ the canonical inclusion $\varphi_{\alpha \beta}: 
(\St_n(R))_{\alpha} \rightarrow (\St_n(R))_{\beta}$ then it is straight forward and easy to 
show that we get a (single domain) global action. 
\end{itemize} 

Abusing notation, we let $\St_n(R)$ denote this global action. Clearly the canonical homomorphism 
of groups $\St_n(R) \rightarrow \E_n(R)$ described above is a morphism of global actions. 

The next proposition provides the algebraic facts about the Steinberg group, which will be needed in the 
(algebraic) homotopy theory of $\GL_n(R)$ and $\Um_n(R)$ (to be defined in the next section). 

\begin{prop}\label{prop:intersection} 
Let $\theta:\St_n(R) \rightarrow \E_n(R)$ denote the canonical homomorphism.
Let $\theta_{E}:\Phi_n \rightarrow {\rm subgroups~of~\E_n(R)}$, $\alpha \rightarrow 
(\E_n(R))_{\alpha}$ and let 
$\theta_{\St}:\Phi_n \rightarrow {\rm subgroups~of~\St_n(R)}$, $\alpha \rightarrow (\St_n(R))_{\alpha}$.  
Clearly, $\theta_{E}$ and $\theta_{\St}$ are partial order preserving maps and thus functors. 
With these notations, one has: 
\begin{itemize}
\item[{\rm[1.]}] The maps  $\theta_{E}$ and $\theta_{\St}$ preserve intersections and the commutative diagram 
$$\xymatrixcolsep{7pc}
\xymatrix{
\Phi_n \ar[r]^{{\theta_\St}} \ar[rd]_{{\theta_E}} & \rm Subgrps.~of~\St_n(R) \ar[d]^\theta \\
& \rm Subgrps.~of~\E_n(R)
}$$
defines a natural isomorphism $\theta_{\St} \rightarrow \theta_{E}$ of functors. 

\item[{\rm[2.]}] The canonical homomorphisms 
$${\rm colim}(\E_n(R))_{\alpha} \longleftarrow {\rm colim}(\St_n(R))_{\alpha} \longrightarrow \St_n(R)$$
are isomorphisms.  
\item[{\rm[3.]}] The canonical map 
$$\bigcup_{\alpha \in \Phi_n} (\St_n(R))_{\alpha} \rightarrow \bigcup_{\alpha \in \Phi_n} (\E_n(R))_{\alpha}$$ is bijective. 
\end{itemize}
\end{prop}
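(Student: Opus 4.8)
The plan is to prove the three assertions in the order stated, using the fact that each $(\E_n(R))_\alpha$ and each $(\St_n(R))_\alpha$ is generated by the matrices $E_{ij}(r)$, resp.\ the symbols $X_{ij}(r)$, with $(i,j)\in\alpha$. First I would establish that $\theta_\St$ preserves intersections. Since $(\St_n(R))_{\alpha\cap\beta}\subseteq (\St_n(R))_\alpha\cap(\St_n(R))_\beta$ is automatic from $\alpha\cap\beta\subseteq\alpha,\beta$, the content is the reverse inclusion. For this I would argue that an element of $(\St_n(R))_\alpha\cap(\St_n(R))_\beta$ lies in $(\St_n(R))_{\alpha\cap\beta}$ by exploiting the fact (recorded in the excerpt) that $\alpha,\beta$ are nilpotent subsets of $J_n$, so each of $(\St_n(R))_\alpha$, $(\St_n(R))_\beta$ is a free group (or at least has a canonical normal form: any element is uniquely a product $\prod_{(i,j)\in\alpha} X_{ij}(r_{ij})$ in a fixed order compatible with the partial order induced by $\alpha$, because the Steinberg relations among generators indexed by a nilpotent set reduce to the additivity $X_{ij}(r)X_{ij}(s)=X_{ij}(r+s)$ and the ``triangular'' commutator $[X_{ij}(r),X_{jk}(s)]=X_{ik}(rs)$). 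Comparing normal forms relative to $\alpha$ and to $\beta$ forces all coordinates outside $\alpha\cap\beta$ to vanish. This is the step I expect to be the main obstacle: making the normal-form argument precise and uniform over all nilpotent $\alpha$, since the ordering in which one writes the product must be chosen coherently. The same argument applied on the $\E_n(R)$ side (where it is classical) gives that $\theta_E$ preserves intersections. For the isomorphism of functors $\theta_\St\to\theta_E$, I would show each component map $\theta\colon (\St_n(R))_\alpha\to(\E_n(R))_\alpha$ is an isomorphism: surjectivity is clear since $\theta(X_{ij}(r))=E_{ij}(r)$ generate, and injectivity follows by comparing the two normal forms — a relation killed in $\E_n(R)$ is already a consequence of the Steinberg relations when all indices come from a single nilpotent $\alpha$. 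Naturality in $\alpha$ is immediate from compatibility with the inclusions $\varphi_{\alpha\beta}$.

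For assertion [1.], once each component is an isomorphism and the square commutes by construction (both $\theta_E$ and $\theta_\St$ send $\alpha$ to the subgroup generated by the relevant generators, and $\theta$ intertwines them), the natural transformation is a natural isomorphism and it preserves intersections because both functors do and isomorphisms reflect/preserve the lattice operations. I would phrase this cleanly as: $\theta_\St$, $\theta_E$ are functors $\Phi_n\to(\text{subgroups})$ that are meet-preserving, and $\theta$ restricts to a natural isomorphism between them.

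For assertion [2.], I would use that the $\St_n(R)$-relations are exactly generated inside the pieces: every generator $X_{ij}(r)$ lies in $(\St_n(R))_\alpha$ for any $\alpha\ni(i,j)$, so the map $\mathrm{colim}_\alpha (\St_n(R))_\alpha\to\St_n(R)$ is surjective; and it is injective because any defining relation of $\St_n(R)$ involves at most the two indices $(i,j),(k,l)$ — hence finitely many indices lying together in a single nilpotent $\alpha$ (one can always enlarge a finite "nilpotent-compatible" index set to a nilpotent one, e.g.\ by taking the transitive closure after a suitable linear reordering, or simply note two generators' indices are contained in some nilpotent $\alpha$) — so the relation already holds in that $(\St_n(R))_\alpha$ and thus is trivial in the colimit. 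The same reasoning shows $\mathrm{colim}_\alpha(\E_n(R))_\alpha\xrightarrow{\ \sim\ }\E_n(R)$, and $\mathrm{colim}_\alpha(\St_n(R))_\alpha\to\mathrm{colim}_\alpha(\E_n(R))_\alpha$ is an isomorphism since it is a colimit of the componentwise isomorphisms from [1.]. The only subtlety is verifying that a finite set of generator-indices is contained in some nilpotent $\alpha$, which I would dispatch with a short lemma on nilpotent subsets of $J_n$.

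For assertion [3.], the set $\bigcup_\alpha (\St_n(R))_\alpha$ is the union of all standard unipotent subgroups, and the map to $\bigcup_\alpha(\E_n(R))_\alpha$ is induced by $\theta$. Surjectivity is immediate. For injectivity, if $x,x'\in\bigcup_\alpha(\St_n(R))_\alpha$ have $\theta(x)=\theta(x')$, pick $\alpha,\alpha'$ with $x\in(\St_n(R))_\alpha$, $x'\in(\St_n(R))_{\alpha'}$; then $\theta(x)=\theta(x')$ lies in $(\E_n(R))_\alpha\cap(\E_n(R))_{\alpha'}=(\E_n(R))_{\alpha\cap\alpha'}$ by the intersection property, so by the isomorphism $(\St_n(R))_{\alpha\cap\alpha'}\cong(\E_n(R))_{\alpha\cap\alpha'}$ of [1.] and uniqueness of normal forms, $x=x'=\theta^{-1}$(that element) in $(\St_n(R))_{\alpha\cap\alpha'}$. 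Hence the map is bijective, completing the proof.
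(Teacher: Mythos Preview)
Your proposal is correct, and parts [2.] and [3.] match the paper's arguments almost verbatim (in [3.] the paper also picks $\gamma=\alpha\cap\beta$, lifts $\theta(x)=\theta(y)\in(\E_n(R))_\gamma$ to $z\in(\St_n(R))_\gamma$, and concludes $x=z=y$). The genuine difference is in [1.]. The paper does not establish normal forms for arbitrary nilpotent $\alpha$; instead it introduces the $S_n$-action by permutation matrices on $\E_n(R)$ (and its lift to $\St_n(R)$), observes that every nilpotent subset sits inside a maximal one and that all maximal nilpotent subsets are $S_n$-conjugate to the upper-triangular set $\delta=\{(i,j):i<j\}$, and then cites Milnor's Lemma~9.14 for the single case $\alpha=\delta$ to obtain the componentwise isomorphism $(\St_n(R))_\alpha\cong(\E_n(R))_\alpha$. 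Only \emph{after} that does the paper deduce that $\theta_{\St}$ preserves intersections, as a formal consequence of the componentwise bijections together with the (already known) intersection property for $\theta_E$. Your order is reversed: you aim to prove the intersection property for $\theta_{\St}$ first via normal forms and then the isomorphism, but both of your steps rest on the same normal-form lemma, so the paper's ordering is logically tighter. What your approach buys is self-containment (no appeal to Milnor, no symmetry reduction); what the paper's approach buys is that the hard normal-form computation is done once, in the standard upper-triangular setting, and transported everywhere by symmetry.
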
 
\begin{proof}
\begin{itemize}
\item[{\rm[1.]}]
Let $S_n$ denote the group of permutations of $n$ elements. 
Let $\pi = \begin{pmatrix}
1  & 2 & \ldots & n \\
\pi(1) & \pi(2) & \ldots & \pi(n) 
\end{pmatrix} \in S_n$. To each element $\pi$, we associate the permutation matrix 
$M_{\pi},$ whose $\pi_i$-the column has zeroes in all entries except $i$-th where it has $1$. 

The groups $S_n$ acts on $\GL_n(R)$ on the right by conjugation by permutation matrices. It is easy to chek that 
$E_{ij}(r)^{\pi} = M_{\pi^{-1}}E_{ij}(r)M_{\pi} = E_{(i\pi) (j\pi)}$ and the resulting action of $S_n$ on 
$\E_n(R)$ preserves the $3$ relations above for elementary matrices. Thus the action of $S_n$ on $\E_n(R)$ lifts to 
an action of $S_n$ on $\St_n(R)$ such that the homomorphism $\theta$ is $S_n$ equivariant. The group $S_n$ acts on 
$J_n$ in the obvious way namely, $(i, j)\pi = (i\pi, j\pi)$ and there is an induced action of $S_n$ on $\Phi_n$.
It is obvious that the maps $\theta_{E}$ and $\theta_{St}$ are $S_n$ equivariant. Let $\delta$ 
denote the nilpotent set $$\{ (i, j)~|~i < j, 1 \leq i, j \leq n \} \subset \Phi_n.$$ The set $\delta$ is a maximal nilpotent 
subset. It is easy to check that any nilpotent subset is contained in a maximal nilpotent subset and that any maximal 
nilpotent subset is conjugate under the action of $S_n$ to $\delta$. 

To prove that $\theta$ defines a natural isomorphism of $\theta_{E}$ and $\theta_{\St}$, we must show that 
for any $\alpha \in \Phi_n$, the surjective canonical homomorphism $\St_n(R)_{\alpha} \rightarrow \E_n(R)_{\alpha}$ is 
injective as well. By the previous paragraph, it suffices to consider the maximal nilpotent set $\delta$. 
But here the result follows immediately from \cite{Mil}, Lemma 9.14. 

It was shown, following the definition of the global action $\GL_n(R)$ that $\theta_{E}$ preserves intersections. 
$\theta_{\St}$ preserves intersections because of the following facts: 
$\theta_{E}$ preserves intersections, each canonical homomorphism $(\St_n(R))_{\alpha} \rightarrow (\E_n(R))_{\alpha}$ 
is bijective and $(\E_n(R))_{\alpha \cap \beta} = (\E_n(R))_{\alpha} \cap (\E_n(R))_{\beta}$. 

\item[{\rm[2.]}] The left hand isomorphism follows immediately from $[1.]$ above. The right hand isomorphism is defined and 
is obviously surjective. Using the definition of the Steinberg group by generators and relations, one can construct
straightforward an inverse to this homomorphism, since any relation is contained in some local 
subgroup $(\St_n(R))_{\alpha}$. 

\item[{\rm[3.]}] Let $x \in (\St_n(R))_{\alpha}$ and $y \in (\St_n(R))_{\beta}$. Let $\gamma = \alpha \cap \beta$.
Suppose $\theta(x) = \theta (y)$. We must show $x = y$. Clearly, $\theta(x) = \theta(y)$ in $(\E_n(R))_{\gamma}$. 
Let $z \in (\St_n(R))_{\gamma}$ be such that $\theta(z) = \theta(x)$. Since $(\St_n(R))_{\gamma} \subset 
(\St_n(R))_{\alpha},$ it follows that $x = z$, since the homomorphism $(\St_n(R))_{\alpha} \rightarrow (\E_n(R))_{\alpha}$ 
is bijective. Similarly, $y = z$. 
\end{itemize} 
\end{proof}

\section{Elementary homotopy theory of global actions} 
                       
This section summarizes in a convenient form the constructions and results we need from the homotopy theory of global actions, in particular of single domain global actions. They are due to the first named author.

\subsection{The notion of homotopy} 

The most natural notion of homotopy is the following. 

   To begin we recall the notion of product for global actions. Suppose $A$ and $B$ are global actions with underlying sets 
$X$ and $Y$ and indexing sets $\Phi_A$ and $\Phi_B$, respectively. Define the {\it product global action} $A \times B$ as follows. 
Its underlying set is the Cartesian product $X \times Y$ of sets and its index set is also the Cartesian product 
$\Phi_{A} \times \Phi_{B}$ with quasi-ordering defined by 
$(a,b) \leq (a',b')$ if and only if $a \leq a'$ and $b \leq b'$. The local set $(X \times Y)_{(a,b)}$ is the Cartesian product 
$X_{a} \times X_{b}$ and the local group $G_{(a,b)}$ is the product group $G_{a} \times G_b$. Its action on 
$(X \times Y)_{(a,b)}$ is the obvious one, namely coordinatewise. 

Let $f,g: A \rightarrow B$ denote morphisms of global actions. Let $L$ denote the line action, cf. Section 2, with underlying set $\Z$. 
For $n \in \Z$ let $\iota_{n}: X \rightarrow X \times L$, $x \rightarrow (x,n)$. It clearly defines a morphism 
$\iota_n: A \rightarrow A \times L$ of global actions. The morphisms $f$ and $g$ are called {\it homotopic} if there is a morphism 
$H: A \times L \rightarrow B$ of global actions and integers $n_{-} \leq n_{+}$ such that for all $n \leq n_{-}$, 
$fH\iota_n = fH\iota_{n_-}$ and for all $n_{+} \leq n$, $gH\iota_n = gH\iota_{n_+}$. The morphism $H$ is called,
as in topology, a {\it homotopy} from f to g.

In some situations such as that of paths, a variant of the above concept
is needed. We shall call the one needed for paths, stable homotopy, and
define it in the next subsection. 
(In lecture notes distributed in the past, it was called end-point homotopy or end-point stable homotopy.)

\subsection{Stable homotopy of paths and the fundamental group} 

The goal of this section is to define the notion of stable homotopy for paths and to define the fundamental group
functor $\pi_1$. In passing we define the path connected component functor $\pi_0$.

   Throughout this section $A$ and $B$ denote global actions with underlying sets $X$ and $Y$, respectively, and $L$ the line action.

   The easiest and most natural way to define a {\it path} in $A$ is as a finite sequence 
$x_1, \ldots ,x_n$ of points $x_i \in X$ such that for each $i < n$ there is an element $g$ in some local group of $A$ such that 
$x_i$ is in the domain of the action of $g$ and $x_i g = x_{i+1}$. The following equivalent definition is better for the 
stable homotopy theory we need and shall develop.

\begin{defn}\label{defn:path}
Let $\omega : L \rightarrow A$ denote a morphism. We say that
it is {\it stable on the left} or simply {\it left stable} if there is an integer $n_{-}$ such that for all $n \leq n_{-}$, 
$\omega(n) = \omega(n_{-})$. In this case we say that $\omega$ {\it stabilizes on the left} to $x = \omega(n_-)$. 
Similarly we say that $\omega$ is {\it stable on the right} or simply {\it right stable} if there is an integer $n_{+}$ such that for 
all $n \geq n_+$, $\omega(n) = \omega(n_+)$. In this case we say that $\omega$ {\it stabilizes on the right} to $x = \omega(n_+)$. 
We say that $\omega$ is {\it left-right stable} if it is stable both on the left and on the right. In this case we can 
clearly assume that $n_- \leq n_+$.  A {\it path} is a left-right stable morphism $\omega: L \rightarrow A$. 
A {\it loop} is a path which stabilizes on the left and on the right to the same element of $X$.

   A path $\omega: L \rightarrow A$ is {\it constant} if $\omega(n) = x$ for all $n \in \Z$ and some fixed $x \in X$. 
If $\omega$ is not constant then it is always the case that $n_- < n_+$. On the other hand, if $\omega$ is constant then 
$n_{-}$ and $n_{+}$ can be any integers. For this reason, we exclude constant paths from the following definition.
\end{defn}

\begin{defn}\label{defn:upperlower}
Let $\omega$ denote a nonconstant path. The {\it lower} or {\it left degree} of $\omega$ is defined by

       $${\rm ld}(\omega) = {\rm sup}\{ n_{-} \in \Z~|~ \omega(n) = \omega(n_-) {\rm ~for~all~} n \leq n_- \}.$$

The {\it upper} or {\it right degree} of $\omega$ is defined by

        $${\rm ud}(\omega) = {\rm inf}\{n_{+} \in \Z~|~\omega(n) = \omega(n_+) {\rm ~for~all~} n \geq n_+ \}.$$ 
\end{defn}

   Next we define the notion of composition for paths.

\begin{defn}\label{defn:composition} 
Let $\omega$ and $\omega'$ denote two paths. The {\it initial point} 
(in $X$) of a nonconstant path $\omega$  is defined by 

                       $${\rm in}(\omega) = \omega({\rm ld}(\omega)).$$

The {\it terminal point} of a nonconstant path $\omega$ is defined by

                       $${\rm ter}(\omega) = \omega({\rm ud}(\omega)).$$ 

The {\it initial} and {\it terminal} points of a constant path $\omega$
taking the constant value $x \in X$ is defined by

                     $${\rm in}(\omega) = {\rm ter}(\omega) = x.$$ 

The {\it composition} $\omega \cdot \omega'$ of paths $\omega$ and $\omega'$ exists if ${\rm ter}(\omega) = {\rm in}(\omega')$ and
is defined as follows:

\begin{center}
$(\omega \cdot \omega')$ $= \left\{ \begin{array}{lll}
\omega & \mbox{if $\omega'$ is constant } \\ 
\medskip
\omega'            & 
\mbox{if $\omega$  is constant.} \end{array} \right.$
\end{center}

If $\omega$ and $\omega'$ are nonconstant then  
\begin{center}
$(\omega \cdot \omega')(n)$ $= \left\{ \begin{array}{lll}
\omega'(n) & \mbox{for all $n \leq {\rm ud}~\omega'$,} \\ 
\medskip
\omega(n - {\rm ud}~\omega' + {\rm ld}~\omega)            & 
\mbox{for all $n \geq {\rm ud}~\omega'$.} \end{array} \right.$
\end{center} 
          
It is clear that the composition law $\cdot$ on paths is associative.
\end{defn} 
 
    We turn now to the notion of stable homotopy for paths.

\begin{defn}
A homotopy $H: L \times L \rightarrow A$ is called a {\it stable}  (or {\it end-point stable}) homotopy of paths if for any 
$n \in \Z$, $H\iota_n$ is a path and if for any pair $m,n \in \Z$,  
${\rm in}(Hi_m) = {\rm in}(Hi_n)$ and ${\rm ter}(H \iota_m) = {\rm ter}(H \iota_n)$. 

Suppose there is a homotopy $H: L \times L \rightarrow A$ and there exist integers $n_{-}, n_{+}$ such that 
$\omega$ is the unique path  with the property $\omega = H\iota_{n}$ for all $n \leq n_{-}$ and if $\omega'$ is the unique path such that 
there is an integer $n_{+}$ with the property $\omega' = H\iota_{n}$ for all $n \geq n_{+}$ then we say that $\omega$ is
{\it stably homotopic} to $\omega'$ and write $\omega \simeq \omega'$.
\end{defn}

   The notion of homotopy is a generalization of the notion of path and has a notion of composition such that the compostion of 
two end-point stable homotopies of paths is again an end-point stable homotopy of paths. We shall take the time to explain 
this systematically, by replacing the line action $L$ in Definitions \ref{defn:path}, \ref{defn:upperlower} and 
\ref{defn:composition} above, by any 
global action $B \times L$ where $B$ is an arbitrary global action. In other words, we are replacing the trivial global action, 
consisting of one point being acted on by the trivial group, 
by an arbitrary global action $B$. Thus instead of moving a point through the space $A$, as in the case of a path, we are moving 
one space $B$ through another space $A$. In this setting the notion of {\it constant homotopy} becomes a morphism 
$H: B \times L \rightarrow A$ such that for any pair $m,n \in \Z$ $H\iota_m = H\iota_n$.

   We give now the analogues of Definitions \ref{defn:path}, \ref{defn:upperlower} and \ref{defn:composition}.

\begin{defn}
Let $H: B \times L \rightarrow A$ denote a morphism. We say that
it is {\it negatively stable} (or lower stable) if there is an integer $n_{-}$ such that for all $n \leq n_{-}$, 
$H\iota_n = H\iota_{n_{-}}$. In this case we say that $\omega$ {\it stabilizes negatively} (or below) to $f = H\iota_{n_{-}}$. 
Similarly we say that $\omega$ is {\it positively stable} (or upper stable) (italex) if there is an integer $n_{+}$ 
such that for all $n \geq n_{+}$, $H\iota_n = H\iota_{n_{+}}$. In this case we say that $\omega$ {\it stabilizes positively} 
(or above) to $g = H\iota_{n_{+}}$. We say that $\omega$ is a {\it homotopy} if it is both negatively and positively stable. 
In this case we say that $f$ is {\it homotopic} to g. Clearly this definition of homotopy for 
morphisms $B \rightarrow A$ is identical with that in Section \S $3.1$. 
\end{defn}

\begin{defn}
Let $H$ denote a homotopy. The {\it  negative or lower degree} of $H$ is defined by

        $${\rm ld}(H) = {\rm sup}\{n_{-} \in \Z~|~H\iota_{n} = H\iota_{n_{-}} {\rm for~all~} n \leq n_{-} \}.$$

The {\it positive} or {\it upper degree} of H is defined by

       $${\rm ud}(H) = {\rm inf}\{ n_{+} \in \Z~|~H\iota_{n} = H\iota_{n_{+}} {\rm for~all~} n \geq n_{+} \}.$$ 

\end{defn} 

   Next we define the notion of composition for homotopies.

\begin{defn}
Let $H$ and $H'$ denote homotopies $B \times L \rightarrow A$. The {\it initial morphism} in  $Mor(B,A)$ of a nonconstant homotopy  
is defined by 

                       $${\rm in}(H) = Hi_{{\rm ld}(H)}.$$

The {\it terminal morphism} in $Mor(B,A)$ of a nonconstant $H$ is defined by

                       $${\rm ter}(H) = Hi_{{\rm ud}(H)}.$$ 

The {\it initial} and {\it terminal} morphism of a constant homotopy
$H$ taking the constant value $f$ in $Mor(B, A)$ is defined by

                     $${\rm in}(H) = {\rm ter}(H) = f.$$

If $H$ is a stable homotopy of paths then ${\rm in}(H)$ is called the {\it initial path} and ${\rm ter}(H)$ the {\it terminal path}.  
The {\it composition} $H \cdot H'$ of homotopies $H$ and $H'$ exists if ${\rm ter}(H) = {\rm in}(H')$ and is defined as follows:

\begin{center}
$(H \cdot H')$ $= \left\{ \begin{array}{lll}
H & \mbox{if $H'$ is constant } \\ 
\medskip
H'            & 
\mbox{if $H$  is constant.} \end{array} \right.$
\end{center}

If $H$ and $H'$ are nonconstant then

\begin{center}
$(H \cdot H')(n)$ $= \left\{ \begin{array}{lll}
H'(n) & \mbox{for all $n \leq {\rm ud}~H'$,} \\ 
\medskip
H(n - {\rm ud}~H' + {\rm ld}~H)            & 
\mbox{for all $n \geq {\rm ud}~H'$.} \end{array} \right.$
\end{center}          

Clearly if $f' = {\rm in}(H')$ and $f = {\rm ter}(H)$ then ${\rm in}(H.H') = f'$ and ${\rm ter}(H.H') = f$. 
Thus the relation of homotopy on morphisms in $Mor(B,A)$ is an equivalence relation. Furthermore if 
$H$ and $H'$ are composable and are at the same time stable homotopies of paths then the composition $H \cdot H'$ is also a 
stable homotopy of paths. This shows that the relation of stable homotopy on paths is an equivalence relation. It is clear that  
composition law $\cdot$ is associative, although we won't need this fact.
\end{defn}

    There is another important way to compose stable homotopies, but not
arbitrary homotopies, which goes as follows. This kind of composition
will be denoted by a square ${\square}$.

\begin{defn}\label{defn:pathhomotopy} 
Let $H: L \times L \rightarrow A$ be a stable homotopy of paths. By definition the elements 
${\rm in}(Hi_n)$ and ${\rm ter}(Hi_n)$ do not depend on the choice of $n$. Define the {\it initial point} (as opposed to intial
path) of $H$ by

                 $${\rm inp}(H) = {\rm in}(Hi_n) {\rm~for~ any~ n \in \Z}.$$ 

Define the {\it terminal point} (as opposed to terminal path) of $H$
by

                 $${\rm terp}(H) = {\rm ter}(Hi_n) {\rm~for~any~ n \in \Z}.$$ 

If $H$ and $H': L \times L \rightarrow A$ are stable homotopies of paths such that ${\rm terp}(H) = {\rm inp}(H')$ then the 
composition $H \square H'$ is defined and has the property that
${\rm in}(H \square H') = {\rm in}(H) \cdot {\rm in}(H')$ and ${\rm ter}(H \square H') = {\rm ter}(H) \cdot {\rm ter}(H')$. 
Moreover the composition law $\square$ is associative, although we won't need this fact.   

   We leave the construction of $\square$ to the interested reader. Theorem \ref{thm:onestep} 
below says that elementary stable homotopies are the only tools one needs
to construct $H \square H'$.
\end{defn}

   We are now in a position to construct the fundamental monoid $\Pi_1(A)$
and fundamental group $\pi_1(A)$ of a pointed global action $A$.

\begin{defn}\label{defn:fundamentalmonoid} 
Let $A$ denote a pointed global action with base point $\circ$ in $X$. The {\it fundamental monoid} 

                   $$\Pi_1(A) = \Pi_1(A, \circ)$$

is the set of all loops at $\circ$, with composition given by the composition law of Definition \ref{defn:composition}  
and identity the constant loop at $\circ$.
\end{defn}

    We want to construct the fundamental group $\pi_1$ from $\Pi_1$. For this we need a definition and a result.

\begin{defn}
A {\it 1-step stable homotopy} $H: L \times L \rightarrow A$ is either a constant stable homotopy or a nonconstant homotopy such that 
${\rm ud}(H) - {\rm ld}(H) = 1$. Clearly every stable homotopy of paths is a composition of
a finite number of $1$-step stable homotopies. Let $n = {\rm ld}(H), \omega = Hi_n,$ and $\omega' = H{i_{n+1}}$. 
A $1$-step homotopy $H$ is called {\it elementary},
if             \begin{center} 
               \begin{itemize} 
               \item[(3.9.1)] it is constant,
               \end{itemize}
               \end{center}
or the following holds. 
There is an $i \in \Z$ such that for all $j \leq i$ and all $j \geq i+2$, 
$\omega(j) = \omega'(j)$ and there are elements $x,y \in X$ satisfying one of the following:

\begin{itemize}

\item[(3.9.2)] 
               $
               \begin{pmatrix}              
               w'(i) & w'(i+1) & w'(i+2)\\  
               w(i) & w(i+1)  & w(i+2)      
               \end{pmatrix}$                
               $= \begin{pmatrix}
 x,y,y \\
 x,x,y
\end{pmatrix}$ 
               
\item[(3.9.3)] $
               \begin{pmatrix}               
               w'(i) & w'(i+1) & w'(i+2) \\       
               w(i) & w(i+1) & w(i+2)         
               \end{pmatrix} 
               $
$
= \begin{pmatrix}
   x,x,y \\ 
  x,y,y
\end{pmatrix} 
$

\item[(3.9.4)] $
               \begin{pmatrix}               
               w'(i) & w'(i+1) & w'(i+2) \\     
               w(i)  & w(i+1)  & w(i+2)          
               \end{pmatrix}
               $
$
= \begin{pmatrix}
x,y,x \\
x,x,x
\end{pmatrix} 
$

\item[(3.9.5)] $
               \begin{pmatrix}               
               w'(i) & w'(i+1) & w'(i+2) \\         
               w(i) & w(i+1) & w(i+2)         
               \end{pmatrix}                   
               $
$= \begin{pmatrix} 
x,x,x \\
x,y,x
\end{pmatrix}.$ 
\end{itemize} 
               
\end{defn}

\begin{thm}\label{thm:onestep} 
Every $1$-step stable homotopy is a composition of elementary homotopies and thus every stable homotopy is a composition of elementary
homotopies.
\end{thm}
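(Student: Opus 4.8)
The plan is to argue by an induction on a complexity measure attached to the strip, reducing an arbitrary $1$-step stable homotopy to a single local ``square'' and then checking that square by hand against the patterns (3.9.2)--(3.9.5).

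First I dispose of the trivial part: a constant $1$-step stable homotopy is already elementary by (3.9.1). So let $H$ be nonconstant, put $n={\rm ld}(H)$, $\omega=H\iota_{n}$ and $\omega'=H\iota_{n+1}$, and read off the local data. Each block $\{j,j{+}1\}\times\{n,n{+}1\}$ is a local set of $L\times L$ on which the local group $\Z/2\Z\times\Z/2\Z$ acts transitively, hence the whole block is a local frame at any of its four corners; since $H$ is a morphism of global actions, the image $\{\omega(j),\omega(j{+}1),\omega'(j),\omega'(j{+}1)\}$ is a local frame of $A$ at $\omega(j)$ for every $j$. Applying this to the frames $\{(j,n),(j,n{+}1)\}$ coming from the local set $\Z\times\{n,n{+}1\}$ of $L\times L$ shows in addition that, for each $j$, either $\omega(j)=\omega'(j)$ or $\omega(j)$ is joined to $\omega'(j)$ by a single local move; and of course $\omega,\omega'$ are paths. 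As they are left--right stable, $\omega$ and $\omega'$ agree and are constant outside a finite window $[a,b]$, and I take as the complexity of $H$ the pair $\bigl(b-a,\;({\rm ud}(\omega)-{\rm ld}(\omega))+({\rm ud}(\omega')-{\rm ld}(\omega'))\bigr)$ with the lexicographic order.

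The inductive step has two moves. If some point $p$ of $X$ lies on both $\omega$ and $\omega'$ strictly inside the window, then cutting $H$ at $p$ exhibits it as a $\square$-composition $H_{1}\square H_{2}$ of two $1$-step stable homotopies of paths in the sense of Definition \ref{defn:pathhomotopy}, each of strictly smaller complexity, and we are done by induction. If no such interior common point exists, I apply a single elementary homotopy --- of backtrack type (3.9.4) or (3.9.5), which is available because the constant tails of $\omega$ (resp.\ $\omega'$) supply three consecutive equal values next to the window, or of slide type (3.9.2)--(3.9.3) --- so as to either create a new interior common point or strictly lower the complexity, and then proceed. The base case is a window of size $\le 2$: the entire content is then a single local frame $\{x,y,z,w\}$ of $A$, all four points lying in one local set $X_{\beta}$ and in one $G_{\beta}$-orbit, with $\omega$ and $\omega'$ each tracing one local move across it. Here one writes down explicitly the finitely many elementary homotopies of types (3.9.2)--(3.9.5) whose composition is $H$, the moves being realized by the elements of $G_{\beta}$ that witness the frame; and one checks that each intermediate strip is again a morphism --- which is automatic, since everything stays inside the single local set $X_{\beta}$, so each quadruple required to be a frame is a $G_{\beta}$-frame by transitivity. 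The final clause of the theorem then follows at once: every stable homotopy of paths is a $\cdot$-composition of finitely many $1$-step stable homotopies (as recorded just before the statement), and each of those is now a composition of elementary homotopies.

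The step I expect to be the genuine obstacle is the inductive reduction, and within it the second move: one must keep precise track of how a $1$-step homotopy changes the essential length of a path --- by $0$ under a slide, by $\pm 2$ under a backtrack --- so that the $\square$-splittings line up at honest common points and the complexity actually decreases at each stage, and one must verify throughout that the combinatorial strips written down are bona fide morphisms of global actions, i.e.\ still send the $2\times 2$ cells of $L\times L$ to local frames of $A$. By contrast the base case is a bounded computation once the frame data is displayed, and the constant case is vacuous.
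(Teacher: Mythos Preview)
The paper gives no proof of this theorem: immediately after the statement it says only that ``The proof is not very difficult and is left to the reader,'' and then remarks that the result is not actually used elsewhere in the paper (elementary homotopies are only used directly, as in Corollary~\ref{cor:omegaomegainverse}). So there is nothing in the paper to compare your argument against.

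As an outline your plan is reasonable and is presumably the sort of argument the authors had in mind: induct on a window measure, split the strip at an index of agreement via the $\square$-operation, and use the constant tails as a source of the repeated values needed to initiate elementary moves. Two places would need sharpening before this is a proof. First, the $\square$-splitting requires $\omega(j)=\omega'(j)$ at the \emph{same} index $j$; a value that merely occurs somewhere on both tracks does not give a cut, so your phrase ``some point $p$ of $X$ lies on both $\omega$ and $\omega'$'' should be read as same-index agreement. Second --- and you already flag this as the hard step --- the claim that, absent any interior index of agreement, a single elementary move always strictly lowers the complexity or creates such an index is where the real combinatorics lives. Note that a generic single-entry change $(a,b,c)\mapsto(a,b',c)$ with $a,b,b',c$ pairwise distinct is \emph{not} on the list (3.9.2)--(3.9.5), so one genuinely has to route through the tails; your proposal gestures at this (``the constant tails \dots\ supply three consecutive equal values'') but does not carry it out. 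Since the paper itself declines to give the argument, your sketch is at roughly the level of detail one could expect here; to make it a proof you would need to execute that second inductive move explicitly.
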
 

    The proof is not very difficult and is left to the reader. However,
in the current paper, we do not use the fact that elementary homotopies generate all stable homotopies, rather we use them, 
as in the proof of Corollary \ref{cor:omegaomegainverse} below, to show directly that certain homotopies exist.

\begin{defn}
 If $\omega$ is a path, define the {\it inverse path} $\omega^{-1}$ by

                      $$\omega^{-1}(n) = \omega(-n).$$ 
\end{defn}

\begin{cor}\label{cor:omegaomegainverse} 
If $\omega$ is a path then $\omega \cdot \omega^{-1}$ is stably homotopic to the
constant path at ${\rm in}(\omega)$.
\end{cor}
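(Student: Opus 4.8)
The plan is to show directly that $\omega \cdot \omega^{-1}$ is stably homotopic to the constant path at $\mathrm{in}(\omega)$ by exhibiting the homotopy as an explicit composition of elementary homotopies, using Theorem \ref{thm:onestep} only in the direction that elementary homotopies \emph{are} stable homotopies (so that any composition of them is one). If $\omega$ is constant there is nothing to prove, so assume $\omega$ is nonconstant with $\mathrm{ld}(\omega) = a$ and $\mathrm{ud}(\omega) = b$, $a < b$. Write the values of $\omega$ as $x_a = \mathrm{in}(\omega), x_{a+1}, \ldots, x_b = \mathrm{ter}(\omega)$, each consecutive pair related by an element of a local group. Then $\omega \cdot \omega^{-1}$ is the path that goes $x_a, \ldots, x_b, x_{b-1}, \ldots, x_a$ (reindexed and stabilized appropriately per Definition \ref{defn:composition}); its key feature is the palindromic "out and back" shape with the turning point $x_b$ in the middle.

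First I would treat the combinatorial heart of the matter: a single "cancel the peak" move. Suppose a path passes through $\ldots, u, v, u, \ldots$ at three consecutive spots $i, i+1, i+2$, where $u \cdot g = v$ for some $g$ in a local group (so also $v \cdot g^{-1} = u$ is realized by a local-group element, the same local group). I claim there is a stable homotopy of paths, fixing all other values, replacing this segment by $\ldots, u, u, u, \ldots$. This is exactly the content of patterns (3.9.4) and (3.9.5) in the definition of elementary homotopy (with the roles of $x,y$ being $u,v$): an elementary $1$-step homotopy interpolates between a path that is constant $= u$ at those three indices and one that is $u, v, u$, with all indices $\le i$ and $\ge i+2$ unchanged, and initial/terminal points preserved. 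So one elementary homotopy removes one peak. Starting from $\omega \cdot \omega^{-1}$, the outermost peak to cancel is at the turning point $x_b$ (the segment $x_{b-1}, x_b, x_{b-1}$); after cancelling it we have $x_a, \ldots, x_{b-1}, x_{b-1}, x_{b-1}, \ldots, x_a$, which after the obvious reparametrization (absorbing the now-redundant middle index, another elementary or trivial move) is $\omega' \cdot \omega'^{-1}$ where $\omega'$ is $\omega$ truncated at $x_{b-1}$.

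Then I would iterate: by downward induction on $b - a$, each application of the peak-cancellation homotopy strictly shortens the path, and after $b-a$ steps we reach the constant path at $x_a = \mathrm{in}(\omega)$. Composing these finitely many elementary (hence stable) homotopies using the $\cdot$ composition law of homotopies — which by the discussion preceding Definition \ref{defn:pathhomotopy} takes composable stable homotopies of paths to a stable homotopy of paths, and preserves initial and terminal paths — yields a single stable homotopy $H: L \times L \to A$ with $\mathrm{in}(H) = \omega \cdot \omega^{-1}$ and $\mathrm{ter}(H)$ the constant path at $\mathrm{in}(\omega)$. By definition of $\simeq$ this gives $\omega \cdot \omega^{-1} \simeq$ the constant path at $\mathrm{in}(\omega)$.

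The main obstacle is bookkeeping rather than conceptual: one must be careful that at each stage the "out and back" path really does present a genuine peak $u, v, u$ with the backward step realized by the \emph{inverse} of the forward local-group element (true because $\omega^{-1}(n) = \omega(-n)$ and local groups are groups, so $g \in G_\alpha \Rightarrow g^{-1} \in G_\alpha$), and that each elementary homotopy genuinely fixes the initial point $x_a$ and the terminal point (also $x_a$, since the path is a loop at $x_a$ after the first cancellation, and before that the terminal point of $\omega \cdot \omega^{-1}$ is $x_a$ as well). One also has to track the index shifts coming from Definition \ref{defn:composition} so that the three "active" coordinates $i, i+1, i+2$ of each elementary homotopy line up with the current peak; this is routine but must be stated with enough care that the reader can reconstruct the $\square$- or $\cdot$-composition. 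No single step is hard; the care is in verifying the patterns (3.9.4)/(3.9.5) apply verbatim and that endpoints are preserved throughout.
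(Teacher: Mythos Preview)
Your proposal is correct and is precisely the argument the paper has in mind: the paper's own proof consists of the single sentence ``The Corollary follows by an easy application of elementary homotopies,'' and your peak-cancellation scheme using patterns (3.9.4)/(3.9.5) iterated $b-a$ times is exactly that easy application spelled out. There is nothing to add beyond the bookkeeping you already flag.
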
 

   The Corollary follows by an easy application of elementary homotopies.

\begin{defn}
Let $A$ denote a pointed global action. By \ref{defn:fundamentalmonoid} stable
homotopy respects composition in $\Pi_1(A)$. Thus the stable homotopy classes of loops in $\Pi_1(A)$ form a monoid with 
identity the stable homotopy class of the constant loop at the base point. By Corollary \ref{cor:omegaomegainverse} every
loop $\omega \in \Pi_1(A)$ has up to stable homotopy an inverse $\omega^{-1}$. Thus
the stable homotopy classes of loops in $\Pi_1(A)$ form a group which we denote by
                                $$\pi_1(A)$$ 
and call the {\it (algebraic) fundamental group} of $A$.

   Two points $x,x' \in X$ are called {\it path connected} if there is
a path $\omega$ such that ${\rm in}(\omega) = x$ and ${\rm ter}(\omega) = x'$. The composition law for paths shows 
that the relation path connected is transitive, the construction of the inverse path $\omega^{-1}$ shows that the relation is
symmetric, and the existence of the constant path at any point shows that
the relation is reflexive. Thus the relation path connected is an
equivalence relation on X.
\end{defn}

\begin{defn}
Let $$\pi_0(A)$$ 
denote the equivalence classes of the relation path connected on $X$.
It is called the set of {\it path connected} components of $A$. If
$A$ has a base point then $\pi_0(A)$ is usually given as base point, the equivalence class of the base point of $X$.
\end{defn}

\subsection{Path connected component of the unimodular row global action} 
We now decribe the unimodular row global action and compute its path connected component. 

{\bf The unimodular global action:}  The {\it unimodular global action} has as underlying set $\Um_n(R)$, the set of all 
$R$-unimodular row vectors $v = (v_1,v_2, \ldots,v_n)$ of length $n$, with coefficients $v_i \in R$. 
Recall that unimodular means there is a row vector $w = (w_1, \ldots ,w_n)$ such that $v(^tw) = \sum_{i} v_iw_i = 1$, 
where $t$ denotes the transpose operator on (not necessarily square) matrices.
(The row $w$ is automatically unimodular, because $1 = ^t1 = ^t(v(^tw)) = w(^tv).$) 
The general linear group $\GL_n(R)$ acts on $\Um_n(R)$ on the right, in the usual way. 
The indexing set $\Phi_n$  as well as the local groups $(\E_n(R))_{\alpha}$ are the same as for the global action
$\GL_n(R)$. Each local set is the whole $\Um_n(R)$ and the action of each local group $\E_n(R)$ on $\Um_n(R)$ is via that of 
$\GL_n(R)$ on $\Um_n(R)$. Abusing notation, we shall let $\Um_n(R)$ denote also this (single domain) global action. 
We give the underlying set of $\Um_n(R)$ the distinguished point $e = (1,0, \ldots, 0)$.

\begin{prop}
$\pi_0(\Um_n(R)) = \Um_n(R)/\E_n(R)$. Give this coset space the base point $e\E_n(R).$ Then,
the connected component of $e$ in $\Um_n(R)$ is the coset space $e\E_n(R).$ 
\end{prop}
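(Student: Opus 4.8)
The plan is to prove the two assertions separately: first identify $\pi_0(\Um_n(R))$ with the orbit space $\Um_n(R)/\E_n(R)$, and then show that the path connected component of the base point $e$ is exactly the orbit $e\E_n(R)$. The second assertion is essentially a special case of the first once the orbit-space identification is in hand, so the core of the work is the first statement.

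For the first statement I would argue that the relation ``path connected'' on $\Um_n(R)$ coincides with the relation ``lies in the same $\E_n(R)$-orbit''. Recall that a path in the global action $\Um_n(R)$ is, up to the equivalent formulation in Definition \ref{defn:path}, a finite sequence $v_1, \ldots, v_m$ of unimodular rows such that consecutive terms are related by the action of an element of some local group $(\E_n(R))_\alpha$. First I would check that each local group $(\E_n(R))_\alpha$ is a subgroup of $\E_n(R)$ (this was observed in the description of the general linear global action: the $(\GL_n(R))_\alpha$, hence the $(\E_n(R))_\alpha$, generate $\E_n(R)$ and each is contained in it). Hence if $v$ and $v'$ are path connected, then $v' = v \cdot g$ for some product $g$ of elements of the various local groups, so $g \in \E_n(R)$ and $v, v'$ lie in the same $\E_n(R)$-orbit. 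Conversely, if $v' = v \cdot g$ with $g \in \E_n(R)$, write $g = g_1 \cdots g_k$ as a product of elementary generators $E_{ij}(r)$; each such generator lies in some local group $(\E_n(R))_\alpha$ (take $\alpha = \{(i,j)\}$, which is nilpotent), so the sequence $v, vg_1, vg_1g_2, \ldots, vg_1\cdots g_k = v'$ is a path. This shows the two equivalence relations agree, and therefore $\pi_0(\Um_n(R))$, which by definition is the set of path connected components, equals the set of $\E_n(R)$-orbits $\Um_n(R)/\E_n(R)$; the base point, the component of $e$, corresponds to the orbit $e\E_n(R)$.

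For the second statement, the path connected component of $e$ is by definition the equivalence class of $e$ under the path connected relation, which by the identification just established is precisely the $\E_n(R)$-orbit $e\E_n(R)$. So this requires no further argument beyond unwinding definitions; one may simply remark that the connected component of the base point is the preimage of the base point orbit under the canonical surjection $\Um_n(R) \to \Um_n(R)/\E_n(R)$.

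I do not expect a serious obstacle here; the only point demanding a little care is the bookkeeping in the definition of a path. One must confirm that the ``finite sequence'' formulation of a path (points joined by successive local group actions) is genuinely equivalent to the ``left-right stable morphism $\omega: L \to A$'' formulation used to define $\pi_0$, and that both local groups being subgroups of $\E_n(R)$ gives containment in \emph{both} directions between the path-connected relation and the orbit relation. Everything else is routine: the key input is simply that the local groups of $\Um_n(R)$ are exactly the standard unipotent subgroups, which collectively generate $\E_n(R)$ and are individually contained in it, together with the fact that each single elementary matrix $E_{ij}(r)$ sits in the local group indexed by the (nilpotent) singleton $\{(i,j)\}$.
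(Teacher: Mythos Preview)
Your argument is correct and matches the paper's proof essentially step for step: both directions of the equivalence between path-connectedness and lying in the same $\E_n(R)$-orbit are established exactly as you describe, and the identification of the component of $e$ with $e\E_n(R)$ is then immediate. The only extra remark you make, about reconciling the two formulations of a path, is a harmless clarification the paper leaves implicit.
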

\begin{proof}
We prove that $v, w$ belong to the same path component in $\Um_n(R)$ if and only if there exists 
$\varepsilon \in \E_n(R)$ such that $v\varepsilon = w$ i.e., if and only if $v\E_n(R) = w\E_n(R).$ 

Let $v, w \in \Um_n(R)$ belong to the same path component and let $\omega$ be a path from $v$ to $w.$ 
As $\omega$ is a morphism of global actions, there exist $\varepsilon_i \in {\GL_n(R)}_{\alpha_i},$
$1 \leq i \leq N$ such that $v\varepsilon_{1}\cdots\varepsilon_{N} = w.$ Then $\varepsilon:= \prod_i \varepsilon_i \in \E_n(R)$
has the required property. 

Conversely suppose $w = v\varepsilon,$ for some $\varepsilon \in \E_n(R).$ Hence there exist $\E_{ij}(\lambda),$
$\lambda \in R$ such that $\varepsilon = \prod \E_{ij}(\lambda).$ As each $\E_{ij}(\lambda)$ lies in some local 
set, we can easily define a path from $v$ to $w.$ Thus, $\pi_0(\Um_n(R)) = \Um_n(R)/\E_n(R)$. 

From this also follows that the path component of the base point $e$ is $e\E_n(R).$  
\end{proof}

We now introduce a global actions structure on the coset space above. We will introduce another important global action 
which is a certain coset space of the Steinberg group. These global actions are crucial 
in computing the fundamental group of the unimodular row global action. 

\begin{itemize} 

\item {\bf The elementary unimodular global action:} The {\it elementary unimodular global action} 
has as underlying set $\EUm_n(R) = eE_n(R),$ {\it the path connected component of the base point in $\Um_n(R).$ } 
The index set as well as the local groups are the same as
those for $\Um_n(R)$. Each local set is the whole of $\EUm_n(R)$. Abusing notation, as usual, we let $\EUm_n(R)$ denote 
this global action. We give it the base point $e$.

\item {\bf The Steinberg unimodular global action:} 
Let $\P_n(R)$ denote the subgroup of $\GL_n(R)$ which leaves $e$ fixed. 
Clearly each matrix in $\P_n(R)$ takes the form $\begin{pmatrix} 
1 & 0 \\ 
v & \tau 
\end{pmatrix},$ for some $v \in M_{n-1, 1}(R),$ $\tau \in \GL_{n-1}(R).$ 

Let $\EP_n(R) = \P_n(R) \cap \E_n(R)$. There is an obvious canonical identification $$\EUm_n(R) = \E_n(R)/\EP_n(R)$$
of global actions, which is induced by sending each element $e\varepsilon$ in $\EUm_n(R)$ to $\varepsilon\EP_n(R).$ 
The global action on the right is the obvious one: the underlying set is the right coset space $\E_n(R)/\EP_n(R)$ and 
the index set $\Phi_n$ and local groups $\E_n(R)_{\alpha}$ are the same as for $\Um_n(R)$. The local sets are
all of $\E_n(R)/\EP_n(R)$ and the action of each local group on $\E_n(R)/\EP_n(R)$ is induced by the natural 
right action of the group $\E_n(R)$ on it. Let $\theta : \St_n(R) \rightarrow \E_n(R)$ denote the canonical homomorphism. 
Let 
$$\B_n(R) = \langle x^{-1}abx \in \theta^{-1}(\EP_n(R))~|~x \in \St_n(R), a \in \St_n(R)_{\alpha}, b \in St_n(R)_{\beta},$$
$${\rm for~some~\alpha, \beta~in~\Phi_n}\rangle.$$ 

Clearly $\B_n(R)$ is a normal subgroup of $\theta^{-1}(\EP_n(R))$. 

The {\it Steinberg unimodular global action} $\StUm_n(R)$ has underlying set the right coset space 
$\St_n(R)/\B_n(R)$. The indexing set $\Phi_n$ and local groups $\St_n(R)_{\alpha}$ are the same as those of
the Steinberg global action $\St_n(R)$. Each local set is all of $\St_n(R)/\B_n(R)$ and the action of each 
$\St_n(R)_{\alpha}$ on $\St_n(R)/\B_n(R)$ is induced by the natural right action of the group $\St_n(R)$ on it.
Abusing notation, we shall denote the Steinberg unimodular action also by $\St_n(R)/\B_n(R)$. 
We give it the distinguished point $e\B_n(R)$. It is easy to check that this is a path-connected 
global action. 

     There is a canonical base point preserving morphism
$$\St_n(R)/\B_n(R) \rightarrow \E_n(R)/\EP_n(R)$$ of coset spaces and global actions, which is induced by the map

     $$\St_n(R) \rightarrow \EUm_n(R)$$ sending each $x$ in $\St_n(R)$ to $e\theta(x)$.
\end{itemize} 

We recall the definition of a covering morphism (see \cite{Bak}) by introducing another important global action: the 
star global action. 

\begin{defn}\label{defn:star}
Let $A$ be a path-connected global action with underlying set $X$, index set $\Phi$ and local groups 
$X_{\alpha} \curvearrowleft  G_{\alpha}$, for $\alpha \in \Phi$. 
Given $\alpha \in \Phi$ and $x \in X$, 
let 
${\rm{star}}(x)$ denote the following global action: 
the underlying set $X_{{\rm{star}}(x)}$ is 
the union of all $x G_{\alpha}$ where $G_{\alpha}$ is a local group which acts on $x$ i.e., 
$$X_{{\rm{star}}(x)} = \underset{\{\alpha \in \Phi| x \in X_{\alpha} \}}{\cup} x \cdot G_{\alpha}$$
The index set $\Phi_{{\rm{star}}(x)}$ consists of all $\alpha \in \Phi$ such that $G_{\alpha}$ acts on $x$. 
$\Phi_{{\rm{star}}(x)}$ inherits its ordering from $\Phi$. 
If $\alpha \in  \Phi$, then $(X_{{\rm star}(x)})_{\alpha} = xG_{\alpha}$ and  $(G_{{\rm star}(x)})_{\alpha} = G_{\alpha}$. 
\end{defn}

\begin{defn}\label{defn:covering} 
A morphism $f: B \rightarrow A$ of global actions is {\it surjective}, if it is surjective map on the underlying sets.  
A surjective morphism $f: B \rightarrow A$ of global actions is called a {\it covering morphism} if for every 
$b \in X_{B}$, the induced map $f:{\rm star}(b) \rightarrow {\rm star}(f(b))$ 
is an isomorphism of global actions.
\end{defn} 

   The next proposition records some facts 
which are needed for the (algebraic) homotopy theory of $\Um_n(R)$.

\begin{cor}\label{cor:coveringsteinberg} 
The canonical homomorphism $\St_n(R) \rightarrow \E_n(R)$ of path-connected 
global actions is a covering morphism in the sense of \cite{Bak}, \S 10.  
\end{cor}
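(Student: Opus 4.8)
The plan is to verify directly that the canonical homomorphism $\theta_n:\St_n(R)\to\E_n(R)$ satisfies the two requirements in Definition \ref{defn:covering}: that it is surjective on underlying sets, and that for each $x\in\St_n(R)$ the induced map $\theta_n:\operatorname{star}(x)\to\operatorname{star}(\theta_n(x))$ is an isomorphism of global actions. Surjectivity is immediate, since $\E_n(R)$ is by definition generated by the elementary matrices $E_{ij}(r)$ and these are the images of the generators $X_{ij}(r)$. So the whole content is the local (star-wise) assertion.

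First I would unwind what $\operatorname{star}(x)$ looks like for the two global actions at hand. For the Steinberg global action every local set is all of $\St_n(R)$, so every local group $\St_n(R)_\alpha$ acts at every point; hence $\Phi_{\operatorname{star}(x)}=\Phi_n$ and $(X_{\operatorname{star}(x)})_\alpha = x\,\St_n(R)_\alpha$, with local group $\St_n(R)_\alpha$. The same holds for $\E_n(R)$, with star at $\theta_n(x)$ having $\alpha$-piece $\theta_n(x)\,\E_n(R)_\alpha$ and local group $\E_n(R)_\alpha$. Thus $\theta_n:\operatorname{star}(x)\to\operatorname{star}(\theta_n(x))$ is a morphism which is the identity on index sets and, on local groups, is precisely the canonical homomorphism $\St_n(R)_\alpha\to\E_n(R)_\alpha$; by Proposition \ref{prop:intersection}[1.] this latter map is an isomorphism for every $\alpha$. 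To conclude it is an isomorphism of global actions I must check it is a bijection on the underlying set $X_{\operatorname{star}(x)}=\bigcup_\alpha x\,\St_n(R)_\alpha$ onto $X_{\operatorname{star}(\theta_n(x))}=\bigcup_\alpha \theta_n(x)\,\E_n(R)_\alpha$, and that its inverse is again a morphism (preserves local frames). Surjectivity on stars follows from surjectivity of each $\St_n(R)_\alpha\to\E_n(R)_\alpha$. For injectivity, translating by $x$ on the left reduces the claim to: if $a\in\St_n(R)_\alpha$, $b\in\St_n(R)_\beta$ and $\theta_n(a)=\theta_n(b)$ then $a=b$ — which is exactly part [3.] of Proposition \ref{prop:intersection}. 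Finally, that the set-inverse respects local frames is formal once it is a bijection intertwining the local group isomorphisms, since an $\alpha$-frame in $\operatorname{star}(\theta_n(x))$ is a finite $\E_n(R)_\alpha$-transitive tuple and its preimage is a finite $\St_n(R)_\alpha$-transitive tuple via the isomorphism $\St_n(R)_\alpha\xrightarrow{\sim}\E_n(R)_\alpha$.

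The one genuinely delicate point — and the reason Proposition \ref{prop:intersection} was proved in advance — is the injectivity of $\theta_n$ on each star, i.e.\ the fact that elements lying in two possibly different standard unipotent subgroups of $\St_n(R)$ are detected by their images in $\E_n(R)$. This is where the intersection-preserving property of $\theta_{\St}$ and $\theta_E$ is essential: one passes from $\St_n(R)_\alpha$ and $\St_n(R)_\beta$ down to $\St_n(R)_{\alpha\cap\beta}$, and uses that $\theta_n$ is already known to be injective on the single subgroup $\St_n(R)_{\alpha\cap\beta}$ (ultimately from \cite{Mil}, Lemma 9.14, after conjugating into the maximal nilpotent set $\delta$). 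Everything else in the argument is bookkeeping about what stars and covering morphisms are.

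I should also remark that path-connectedness of both $\St_n(R)$ and $\E_n(R)$ — asserted in the statement — is clear: any element is a product of generators, each of which lies in some local subgroup, so one builds a path from the identity to it step by step, exactly as in the proof that $\pi_0(\Um_n(R))=\Um_n(R)/\E_n(R)$. Hence the statement's hypotheses are in place and the covering claim reduces cleanly to Proposition \ref{prop:intersection}.
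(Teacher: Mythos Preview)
Your proof is correct and is exactly the argument the paper has in mind: the corollary is stated without proof, as an immediate consequence of Proposition \ref{prop:intersection}, and you have simply made explicit how parts [1.] and [3.] of that proposition supply, respectively, the local-group isomorphisms and the star-wise injectivity needed to verify Definition \ref{defn:covering}. There is nothing to add.
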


\begin{cor} 
The canonical morphism $\StUm_n(R) \rightarrow \EUm_n(R)$ is
a covering morphism of path-connected global actions.
\end{cor}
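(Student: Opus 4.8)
The plan is to deduce this Corollary from the previous one (Corollary~\ref{cor:coveringsteinberg}), which asserts that $\St_n(R) \to \E_n(R)$ is a covering morphism, by pushing that covering property down through the two coset-space quotients $\St_n(R)/\B_n(R)$ and $\E_n(R)/\EP_n(R)$. Since $\StUm_n(R) = \St_n(R)/\B_n(R)$ and $\EUm_n(R) = \E_n(R)/\EP_n(R)$ as global actions, and the canonical morphism is induced by $\theta$, the task reduces to: (i) checking surjectivity, which is immediate since $\theta$ is surjective onto $\E_n(R)$ and hence the induced map on right cosets is onto; (ii) for each $x\B_n(R)$ in the underlying set, verifying that the induced map $\operatorname{star}(x\B_n(R)) \to \operatorname{star}(\theta(x)\EP_n(R))$ is an isomorphism of global actions. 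Because all local sets here are the whole coset space and the index sets $\Phi_n$ agree on both sides, the content of (ii) is a purely local statement about the orbits $x\B_n(R)\cdot \St_n(R)_\alpha$ versus $\theta(x)\EP_n(R)\cdot \E_n(R)_\alpha$.

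First I would reduce to the base point. By translating with an element of $\St_n(R)$ (the group acts transitively on its own underlying set by the covering property of $\St_n(R)\to\E_n(R)$, and this action descends compatibly to the coset space), it suffices to verify the star isomorphism at the distinguished point $e\B_n(R)$, lying over $e\EP_n(R)$. By Proposition~\ref{prop:intersection}[1.], each canonical map $\St_n(R)_\alpha \to \E_n(R)_\alpha$ is an \emph{isomorphism} of groups. So for a fixed $\alpha\in\Phi_n$ the orbit map on the star level is: $\St_n(R)_\alpha \to e\B_n(R)\cdot\St_n(R)_\alpha$, $g \mapsto e\B_n(R)g$, and correspondingly $\E_n(R)_\alpha \to e\EP_n(R)\cdot\E_n(R)_\alpha$, $h\mapsto e\EP_n(R)h$. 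Surjectivity of the induced map between these two orbits is clear. For injectivity I must show: if $g\in\St_n(R)_\alpha$ and $\theta(g)\in\EP_n(R)$, then $g\in\B_n(R)$ — indeed more precisely that $e\B_n(R)g = e\B_n(R)g'$ for $g,g'\in\St_n(R)_\alpha$ whenever the images coincide, i.e. whenever $g{g'}^{-1}\in\St_n(R)_\alpha$ has image in $\EP_n(R)$; but $g{g'}^{-1}$ is of the form $x^{-1}abx$ (with $x=1$, $a=g$, $b={g'}^{-1}$, both in $\St_n(R)_\alpha$) lying in $\theta^{-1}(\EP_n(R))$, hence in $\B_n(R)$ by the very definition of $\B_n(R)$.

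The remaining point — and the one I expect to be the main obstacle — is to check that this bijection of underlying sets is an \emph{isomorphism of global actions}, which in the definition of covering morphism means: not merely that stars map bijectively, but that the local structure matches, i.e. the map identifies $\operatorname{star}(e\B_n(R))$ with $\operatorname{star}(e\EP_n(R))$ including the index sets and local group actions. Here one uses that $\Phi_{\operatorname{star}(e\B_n(R))} = \{\alpha : \St_n(R)_\alpha \text{ acts on } e\B_n(R)\} = \Phi_n$ (all local groups act, since every local set is the full coset space), and likewise downstairs, so the index sets agree, and the local-group identification is exactly Proposition~\ref{prop:intersection}[1.]. One must also confirm the morphism is well-defined and frame-preserving globally, which follows because it is induced by the group homomorphism $\theta$ carrying $\St_n(R)_\alpha$ onto $\E_n(R)_\alpha$. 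Finally, path-connectedness of $\StUm_n(R)$ was already noted in the construction of the Steinberg unimodular global action, and path-connectedness of $\EUm_n(R)$ is by definition, so both sides are path-connected global actions as required. Assembling (i), (ii), and this local-structure check gives the Corollary.
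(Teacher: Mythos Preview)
The paper states this corollary without proof, so there is no explicit argument to compare against; implicitly it is the Steinberg analogue of Proposition~\ref{prop:two}, whose proof (given later) works directly at an arbitrary coset. Your overall strategy is right, and the key observation---that the defining generators of $\B_n(R)$ are exactly what is needed for star-injectivity---is the heart of the matter. However, two steps in your write-up do not go through as stated.

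First, the reduction to the base point is not justified. Right translation by $y\in\St_n(R)$ does not give a morphism of global actions (it does not preserve local frames), and left translation, which does, only descends to the right-coset space $\St_n(R)/\B_n(R)$ for $y$ normalizing $\B_n(R)$; but $\B_n(R)$ is normal only in $\theta^{-1}(\EP_n(R))$, not in all of $\St_n(R)$. Second, even at the base point your injectivity check compares only $g,g'$ lying in the \emph{same} $\St_n(R)_\alpha$, whereas the star is the union over all $\alpha$, so you must compare $g_\alpha\in\St_n(R)_\alpha$ against $g_\beta\in\St_n(R)_\beta$ for possibly different $\alpha,\beta$. Both issues are remedied by arguing directly at an arbitrary point $\B_n(R)x$: if $\B_n(R)xg_\alpha$ and $\B_n(R)xg_\beta$ have the same image in $\EP_n(R)\backslash\E_n(R)$, then $\theta(xg_\alpha g_\beta^{-1}x^{-1})\in\EP_n(R)$, and $xg_\alpha g_\beta^{-1}x^{-1}$ is literally one of the generators $y^{-1}aby$ (with $y=x^{-1}$, $a=g_\alpha$, $b=g_\beta^{-1}$) in the definition of $\B_n(R)$, so the two cosets coincide. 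This is exactly why the conjugating element $x$ appears in the definition of $\B_n(R)$, a feature your base-point reduction suppresses. With this correction the rest of your argument (surjectivity via Proposition~\ref{prop:intersection}[1.], identification of index sets and local groups on stars, and path-connectedness) is fine.
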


\section{Coverings, fundamental group and elementary unimodular row global action} 
In this section we state (without proof) results for homotopy theory in the framework of global actions. 
The interested reader should refer to \cite{Bak}, \S 11. 
These will be applied in the concrete case of the elementary unimodular row global action to 
compute its fundamental group explicitly. 

We give some basic definitions and then outline the construction of a connected, simply connected covering of the 
path-connected single domain global action $(\EUm_n(R), e\EP_n(R))$. The checking of details 
is not difficult and is left to the reader. 

\begin{defn}\label{defn:simplyconnected} 
A path-connected global action $A$ with base-point $a_0$ is said to be simply connected, if 
the fundamental group of $A$ at $a_0$ is trivial i.e., $\pi_1(A, a_0) = e.$ 
\end{defn}

\begin{thm}\label{thm:model-special-case}
There exists a path-connected, simply connected base point preserving covering of the path-connected global action 
$(\EUm_n(R), e\EP_n(R)).$ Moreover, it is of the form $\E_n(R)/H_{B},$ where $H_{B}$ is a normal subgroup of $\EP_n(R).$
\end{thm}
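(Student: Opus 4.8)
The intended proof is a direct application, to the coset global action $\EUm_n(R)=\E_n(R)/\EP_n(R)$, of the general construction of simply connected coverings of path-connected global actions carried out in \cite{Bak}, \S 11; I indicate below the construction and where its substance lies. From \cite{Bak} one has the following input: every path-connected pointed global action admits a path-connected, base-point preserving covering with trivial fundamental group, unique up to isomorphism over the base; and when the base is a coset global action $G/H$ whose local groups generate $G$, this covering is again a coset global action. The plan is to run this with $G=\E_n(R)$, $H=\EP_n(R)$, local groups $(\E_n(R))_\alpha$, and to verify that the output has the asserted shape.

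The candidate covering is the natural projection $p\colon\E_n(R)/H_B\to\EUm_n(R)$, where
$$H_B:=\bigl\langle\,\varepsilon^{-1}\sigma\tau\varepsilon\in\EP_n(R)\ :\ \varepsilon\in\E_n(R),\ \sigma\in(\E_n(R))_\alpha,\ \tau\in(\E_n(R))_\beta,\ \alpha,\beta\in\Phi_n\,\bigr\rangle,$$
the coset space $\E_n(R)/H_B$ being given the coset global action structure (index set $\Phi_n$, local groups $(\E_n(R))_\alpha$, all local sets the whole space, base point $e\,H_B$); equivalently $H_B=\theta_n(\B_n(R))$, and the surjection $\StUm_n(R)=\St_n(R)/\B_n(R)\to\E_n(R)/H_B$ induced by $\theta_n$ will be seen to be an isomorphism. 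First, $H_B$ is a normal subgroup of $\EP_n(R)$: its displayed generating set is stable under conjugation by $\EP_n(R)$, because $\pi^{-1}(\varepsilon^{-1}\sigma\tau\varepsilon)\pi=(\varepsilon\pi)^{-1}\sigma\tau(\varepsilon\pi)$ still lies in $\EP_n(R)$ for $\pi\in\EP_n(R)$. This already yields the ``moreover'' clause. Path-connectedness of $\E_n(R)/H_B$ is immediate, since the $(\E_n(R))_\alpha$ generate $\E_n(R)$: expressing a coset representative as a product of elementary generators produces a path to it from $e\,H_B$, exactly as in the computation of $\pi_0(\Um_n(R))$. That $p$ is a covering morphism in the sense of Definition \ref{defn:covering} unwinds, on the star of each point, to the statement that $H_B$ absorbs every $\E_n(R)$-conjugate of an element of a single local group $(\E_n(R))_\alpha$ that happens to lie in $\EP_n(R)$ — which is forced by the definition of $H_B$ on taking the factor $\tau$ equal to $1$; this is Bak's criterion for a quotient map of coset global actions to be a covering.

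The one genuinely non-formal step — and the step I expect to be the main obstacle — is that $\E_n(R)/H_B$ is simply connected. Here is the plan. A loop at $e\,H_B$ is recorded by a word $g_1\cdots g_m$, $g_k\in(\E_n(R))_{\alpha_k}$, with $g_1\cdots g_m\in H_B$, together with a bracketing of its partial products by the local structure, and every stable homotopy is a composition of elementary ones (Theorem \ref{thm:onestep}). Using Proposition \ref{prop:intersection} — so that relations among the local groups are visible inside a single local subgroup and $(\E_n(R))_\alpha\cap(\E_n(R))_\beta=(\E_n(R))_{\alpha\cap\beta}$ — one first rewrites the loop, by elementary homotopies, as a concatenation of loops each corresponding to a single defining generator $\varepsilon^{-1}\sigma\tau\varepsilon$ of $H_B$. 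Each such loop, written out as ``go out along a path spelling $\varepsilon^{-1}$, take the two local steps $\sigma$, $\tau$, come back along $\varepsilon$'', is contracted in two moves: $\omega\cdot\omega^{-1}\simeq$ constant (Corollary \ref{cor:omegaomegainverse}) collapses the outward and return paths, reducing to the two-step loop given by $\sigma$ then $\tau$ at the appropriate point, and this two-step loop is stably null-homotopic precisely because $\varepsilon^{-1}\sigma\tau\varepsilon\in H_B$, the required null-homotopy being a concrete composition of the elementary homotopies (3.9.2)--(3.9.5). Carrying out and organizing these elementary homotopies is the real work; the normality of $H_B$ in $\EP_n(R)$ is what keeps the bookkeeping consistent.

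Finally, uniqueness of the simply connected covering closes the argument: what has been constructed is, up to isomorphism over $\EUm_n(R)$, the covering produced by the general construction of \cite{Bak}, \S 11, and in particular is independent of the presentation used. Hence $(\EUm_n(R),e\EP_n(R))$ admits a path-connected, simply connected, base-point preserving covering of the form $\E_n(R)/H_B$ with $H_B\trianglelefteq\EP_n(R)$, which is the assertion. As a by-product the fibre $\EP_n(R)/H_B\cong\widetilde{P_n}(R)/\B_n(R)$ is identified — so that $\ker\theta_n\subseteq\B_n(R)$ — and this group will be shown in the next section to be $\pi_1(\EUm_n(R))$.
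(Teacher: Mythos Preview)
Your opening paragraph is the paper's entire proof: Theorem~\ref{thm:model-special-case} is obtained by applying the structure theorem for single-domain global actions in \cite{Bak}, \S11 (specifically Theorem~11.1 and Proposition~11.3 there) to the coset action $\E_n(R)/\EP_n(R)$, and the paper records nothing beyond that citation. In particular, the paper does \emph{not} identify $H_B$ at this stage; it asserts only that some normal subgroup of $\EP_n(R)$ does the job.

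Everything from your second paragraph onward is not a proof of this theorem but an anticipation of Theorem~\ref{thm:IMP1}, and your route to that later result differs from the paper's. You propose to verify directly, via elementary homotopies, that $\E_n(R)/(\EP_n(R))_2$ is simply connected, correctly flagging this as the substantive step and leaving it as a sketch. The paper avoids that computation entirely: it takes the abstract simply connected cover $\E_n(R)/H_B$ supplied by the present theorem, observes via Proposition~\ref{prop:two} that $\E_n(R)/(\EP_n(R))_2 \to \E_n(R)/H_B$ is itself a covering (since $H_B\subseteq\EP_n(R)$ forces $(H_B)_2\subseteq(\EP_n(R))_2$), and then concludes $\pi_1(\E_n(R)/(\EP_n(R))_2)=e$ from injectivity of $p_*$ on fundamental groups. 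This bootstrap costs nothing beyond the black box from \cite{Bak}; your direct approach, by contrast, would require actually carrying out the loop-contractions you outline. One small correction: the covering condition for $p$ unwinds (cf.\ the proof of Proposition~\ref{prop:two}) to containment in $H_B$ of every conjugate $\varepsilon^{-1}\sigma\tau\varepsilon\in\EP_n(R)$ with $\sigma,\tau$ in possibly distinct local groups, not merely the single-factor case $\tau=1$; since this is precisely your definition of $H_B$, the conclusion is immediate anyway.
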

\begin{proof}
Follows from ``Structure theorem'' for single domain global actions 
(see \cite{Bak}, Definition $3.3$, Theorem $11.1$, Proposition $11.3$.) 
\end{proof} 

We now would like to prove that the path-connected, simply connected covering of $(\EUm_n(R), e\EP_n(R))$ is also universal.
For this, we state without proof the ``Lifting criterion'' in the context of global actions. 

\begin{lem}\label{lem:lift}
Let $q: (B, b_0) \rightarrow (\EUm_n(R), e\EP_n(R))$ be a pointed covering morphism of path-connected global actions 
Let $C$ be a path connected global action with base point $c_0.$ 
Let $f: (C, c_0) \rightarrow (\EUm_n(R), e\EP_n(R))$ be a pointed morphism. 
Then, a morphism $\widetilde{f}: (C, c_0) \rightarrow 
(B, b_0)$ lifting $f$ exists if and only if $f_{*}(\pi_1(C, c_0)) \subset q_{*}(\pi_1(B, b_0))$. 
Moreover, if  $f$ exists, then it is unique. 
\end{lem}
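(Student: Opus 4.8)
The plan is to mimic the classical topological proof of the lifting criterion, transported into the global-action setting using the three homotopy-theoretic tools already available in the paper: composition and inverses of paths (Definition \ref{defn:composition}, Corollary \ref{cor:omegaomegainverse}), the local-frame preservation property that defines a morphism of global actions, and the fact that $q$ is a covering morphism, i.e. $q$ restricts to an isomorphism $\mathrm{star}(b)\to\mathrm{star}(q(b))$ for every $b$. First I would dispose of uniqueness: if $\widetilde f_1,\widetilde f_2$ both lift $f$ and agree at $c_0$, then for any $x\in X_C$ pick a path $\omega$ in $C$ from $c_0$ to $x$; applying $\widetilde f_i$ gives paths in $B$ lying over $f\omega$, agreeing at the initial point, and since $q$ is a covering one shows step by step (using that $q$ is injective on each star, hence path-lifting over a covering is unique) that the two lifted paths coincide, so $\widetilde f_1(x)=\widetilde f_2(x)$. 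The necessity of the $\pi_1$-condition is immediate: if $\widetilde f$ exists then $f_*=q_*\circ\widetilde f_*$ on $\pi_1(C,c_0)$, so $f_*(\pi_1(C,c_0))\subseteq q_*(\pi_1(B,b_0))$.

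The substance is the sufficiency direction, and here the plan is the standard recipe. To define $\widetilde f(x)$ for $x\in X_C$, choose a path $\omega$ in $C$ from $c_0$ to $x$, push it forward to the path $f\omega$ in $\EUm_n(R)$ from $e\EP_n(R)$ to $f(x)$, lift $f\omega$ uniquely to a path $\widetilde{f\omega}$ in $B$ starting at $b_0$ (unique path-lifting for covering morphisms is the property one extracts from Definition \ref{defn:covering}: a path is a finite chain of local-group moves, each move lies in some $\mathrm{star}$, and $q$ is an isomorphism on stars, so each move lifts uniquely), and set $\widetilde f(x):=\mathrm{ter}(\widetilde{f\omega})$. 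Well-definedness is where the hypothesis $f_*(\pi_1(C,c_0))\subseteq q_*(\pi_1(B,b_0))$ enters: given two paths $\omega,\omega'$ from $c_0$ to $x$, the loop $\omega\cdot(\omega')^{-1}$ represents a class in $\pi_1(C,c_0)$, so $f(\omega\cdot(\omega')^{-1})$ is stably homotopic to a loop that lifts to a loop in $B$ at $b_0$; using that stable homotopies lift (they are built from elementary homotopies, cf. Theorem \ref{thm:onestep}, each of which lives in a star and hence lifts under the covering), one concludes $\widetilde{f\omega}$ and $\widetilde{f\omega'}$ have the same terminal point. Basepoint preservation is the choice $\omega=$ constant path, giving $\widetilde f(c_0)=b_0$.

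It then remains to check that the set-map $\widetilde f\colon X_C\to X_B$ so defined is actually a morphism of global actions, i.e. preserves local frames, and that $q\widetilde f=f$. The relation $q\widetilde f=f$ holds pointwise because $q(\mathrm{ter}(\widetilde{f\omega}))=\mathrm{ter}(q\widetilde{f\omega})=\mathrm{ter}(f\omega)=f(x)$. For the local-frame condition, take a local frame $x_0,\dots,x_p$ at $x_0$ in some $\gamma$, so $x_j=x_0\cdot g_j$ with $g_j\in G_\gamma$. Fix a path $\omega$ from $c_0$ to $x_0$; for each $j$ the path $\omega$ followed by the one-step move $x_0\mapsto x_0 g_j$ is a path from $c_0$ to $x_j$, and its $f$-image differs from $f\omega$ by the single local move $f(x_0)\mapsto f(x_0)h_j$ where $h_j$ is the image of $g_j$ under the morphism of local groups attached to $f$; lifting through the covering, $\widetilde{f\omega}$ extended by the unique lift of that move lands at $\widetilde f(x_j)$, and because $q$ is an isomorphism $\mathrm{star}(\mathrm{ter}\widetilde{f\omega})\to\mathrm{star}(f(x_0))$, all these lifted moves take place inside a single local group action at $\widetilde f(x_0)$, exhibiting $\widetilde f(x_0),\dots,\widetilde f(x_p)$ as a local frame. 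Hence $\widetilde f$ is a morphism, completing the proof.

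I expect the main obstacle to be the lifting of stable homotopies of paths through the covering morphism $q$ cleanly enough to make well-definedness of $\widetilde f$ rigorous: one must verify that an elementary homotopy (the configurations (3.9.1)--(3.9.5)) between two paths lifts to an elementary homotopy between their lifts with matching endpoints, using only that $q$ is a star-isomorphism, and then iterate over a finite composition via Theorem \ref{thm:onestep}. This is the global-action analogue of the homotopy-lifting property for topological covering spaces; it is routine in spirit but requires care because "homotopy" here is the combinatorial notion of Section \ref{defn:pathhomotopy}, so one cannot invoke topological intuition directly. The paper explicitly defers these verifications to \cite{Bak}, \S 11, which is consistent with stating Lemma \ref{lem:lift} without proof; a self-contained argument would spend most of its length on exactly this homotopy-lifting step.
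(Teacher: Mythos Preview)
The paper does not prove this lemma at all: it is explicitly introduced with the words ``we state without proof the `Lifting criterion','' and the reader is referred to \cite{Bak}, \S 11. So there is no in-paper argument to compare against. Your proposal supplies precisely the standard covering-space proof transported to global actions, and it is correct in outline; you even anticipate correctly that the homotopy-lifting step is the nontrivial content and is what the paper outsources to \cite{Bak}.

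One small correction of detail: in your verification that $\widetilde f$ preserves local frames you speak of ``the image of $g_j$ under the morphism of local groups attached to $f$.'' In this paper a morphism of global actions is merely a set map that preserves local frames (Definition following Definition 2.7); no homomorphism of local groups is part of the data. The correct phrasing is that, since $f$ is a morphism, the frame $x_0,\dots,x_p$ is sent to a $\beta$-frame $f(x_0),\dots,f(x_p)$ for some $\beta$, so there \emph{exist} $h_j\in (G_B)_\beta$ with $f(x_0)\cdot h_j=f(x_j)$; these $h_j$ need not arise functorially from the $g_j$. This does not affect the rest of your argument, since all you need is that the $f(x_j)$ lie in a single star at $f(x_0)$, which the star-isomorphism property of $q$ then lifts.
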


\begin{cor}\label{cor:uniqueuniversal} 
Every path-connected, simply-connected covering of the connected single domain global action $(\EUm_n(R), e\EP_n(R))$
is universal. In particular, any two path-connected, simple connected coverings of the global action $(\EUm_n(R), e\EP_n(R))$
are isomorphic.   
\end{cor}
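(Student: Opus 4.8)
The plan is to deduce the Corollary from the Lifting Criterion, Lemma \ref{lem:lift}, in exactly the way one deduces the analogous statement for topological covering spaces. Recall that a path-connected covering morphism $q\colon (B, b_0) \to (\EUm_n(R), e\EP_n(R))$ is \emph{universal} when every pointed covering morphism $p\colon (B', b_0') \to (\EUm_n(R), e\EP_n(R))$ of path-connected global actions admits a (necessarily unique) pointed lift $\widetilde{q}\colon (B, b_0) \to (B', b_0')$ with $p\widetilde{q} = q$. So the task splits into: (i) produce such a lift whenever the source $B$ is simply connected, and (ii) run the usual two-sided-inverse argument to get the isomorphism.

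First I would fix a path-connected, simply connected covering $q\colon (B, b_0) \to (\EUm_n(R), e\EP_n(R))$ together with an arbitrary pointed covering morphism $p\colon (B', b_0') \to (\EUm_n(R), e\EP_n(R))$ of path-connected global actions, and apply Lemma \ref{lem:lift} \emph{with the covering morphism in the lemma taken to be $p$}, with $C = B$, $c_0 = b_0$, and $f = q$. The hypothesis of the lemma is the inclusion $q_{*}(\pi_1(B, b_0)) \subseteq p_{*}(\pi_1(B', b_0'))$; but $B$ is simply connected, so $\pi_1(B, b_0) = e$ by Definition \ref{defn:simplyconnected} and the inclusion holds trivially. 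Hence Lemma \ref{lem:lift} furnishes a pointed morphism $\widetilde{q}\colon (B, b_0) \to (B', b_0')$ with $p\widetilde{q} = q$, and it is unique. This is precisely universality of $q$.

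For the final assertion I would take two path-connected, simply connected coverings $q\colon (B, b_0) \to (\EUm_n(R), e\EP_n(R))$ and $q'\colon (B', b_0') \to (\EUm_n(R), e\EP_n(R))$. Applying the previous paragraph once in each direction yields pointed morphisms $\widetilde{q}\colon (B, b_0) \to (B', b_0')$ over $\EUm_n(R)$ and $\widetilde{q'}\colon (B', b_0') \to (B, b_0)$ over $\EUm_n(R)$. Then $\widetilde{q'}\circ\widetilde{q}$ is a pointed self-morphism of $(B, b_0)$ satisfying $q\circ(\widetilde{q'}\circ\widetilde{q}) = q'\circ\widetilde{q} = q$, so it is a lift of $f = q$ through the covering $q$ that fixes $b_0$; the identity morphism of $B$ is another such lift, so by the uniqueness clause of Lemma \ref{lem:lift} we get $\widetilde{q'}\circ\widetilde{q} = \mathrm{id}_B$. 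The symmetric argument gives $\widetilde{q}\circ\widetilde{q'} = \mathrm{id}_{B'}$, whence $\widetilde{q}$ is an isomorphism of global actions and $(B, b_0) \cong (B', b_0')$.

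The argument is essentially formal, and the existence of at least one such covering is already secured by Theorem \ref{thm:model-special-case}; the only points that require care are lining up the data in Lemma \ref{lem:lift} correctly — the covering there must be the \emph{target} $p$, while the simply connected covering $q$ plays the role of the morphism $f$ being lifted — and checking that the composite $\widetilde{q'}\circ\widetilde{q}$ actually belongs to the class of morphisms governed by the uniqueness clause, i.e. that it commutes with $q$ and preserves the base point. Beyond this bookkeeping I do not expect any genuine obstacle.
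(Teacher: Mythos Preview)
Your proposal is correct and follows essentially the same route as the paper: both invoke Lemma~\ref{lem:lift} with the simply connected covering playing the role of $f$ and any other covering playing the role of $q$, using $\pi_1 = e$ to force the inclusion hypothesis, and then deduce the isomorphism of two simply connected coverings from universality. The paper's proof merely asserts the last step (``universality then implies\ldots''), whereas you have written out the standard two-sided-inverse argument via the uniqueness clause of Lemma~\ref{lem:lift}; this extra detail is fine and does not constitute a different approach.
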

\begin{proof}
Let $f: (C, c_0) \rightarrow (\EUm_n(R), e\EP_n(R))$ be a morphism to 
the global action $(\EUm_n(R), e\EP_n(R))$ from a 
path-connected, simply connected covering of $(\EUm_n(R), e\EP_n(R))$. 
Then the lifting criterion ensures that $(C, c_0)$ is universal, as $f_{*}(\pi_1(C, c_0)) = e.$ Universality 
then implies that any two path-connected, simple connected coverings of $(\EUm_n(R), e\EP_n(R))$
are isomorphic.    
\end{proof}

\begin{cor}\label{cor:Fiber} 
Let $(\E_n(R)/H_{B}, eH_{B})$ be the path-connected, simple connected of $(\E_n(R)/\EP_n(R), e\EP_n(R))$ with pointed 
covering morphism $p: (\E_n(R)/H_{B}, eH_{B}) \rightarrow (\E_n(R)/\EP_n(R), e\EP_n(R)).$ 
Then, $$\pi_1(\EUm_n(R), e\EP_n(R)) \simeq p^{-1}(e\EP_n(R)).$$ 
\end{cor}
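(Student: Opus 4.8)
The plan is to transcribe into Bak's framework the classical bijection between the fibre of a universal covering over the base point and the fundamental group of the base. Write $\ast = e\EP_n(R)$ and $\tilde\ast = eH_B$ for the base points of $\EUm_n(R) = \E_n(R)/\EP_n(R)$ and $\E_n(R)/H_B$ respectively. First I would isolate a path-lifting and a homotopy-lifting property for $p$. Given a loop $\omega$ at $\ast$, choose $\ell_0 \le {\rm ld}(\omega)$, so that $\omega$ is a pointed morphism $(L, \ell_0) \to (\EUm_n(R), \ast)$; since $L$ is path-connected and simply connected, Lemma \ref{lem:lift} applies with no side condition and yields a unique lift $\tilde\omega : (L, \ell_0) \to (\E_n(R)/H_B, \tilde\ast)$ with $p\tilde\omega = \omega$. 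Next I would note that $\tilde\omega$ is again left-right stable, hence a path: because $p$ is an isomorphism — in particular injective — on each star (Definition \ref{defn:covering}), any two consecutive values $\tilde\omega(k), \tilde\omega(k+1)$, which lie in a common star since $\tilde\omega$ preserves local frames, must coincide as soon as their $p$-images do; applying this on the two tails where $\omega$ is constant forces $\tilde\omega$ to be constant there too. The same two facts hold for morphisms out of $L \times L$ (which is likewise path-connected and simply connected), so a stable homotopy of paths $H : L \times L \to \EUm_n(R)$ lifts to $\tilde H : L \times L \to \E_n(R)/H_B$, and the star-injectivity of $p$ again forces $\tilde H$ to be a stable homotopy of paths. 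I would then set
$$\Phi : \pi_1(\EUm_n(R), \ast) \longrightarrow p^{-1}(\ast), \qquad \Phi([\omega]) = {\rm ter}(\tilde\omega),$$
where $\tilde\omega$ is the lift of $\omega$ issuing from $\tilde\ast$; this lands in $p^{-1}(\ast)$ because $p({\rm ter}(\tilde\omega)) = {\rm ter}(\omega) = \ast$.

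To see that $\Phi$ is well defined I would take $\omega \simeq \omega'$, lift a witnessing stable homotopy of paths $H$ to $\tilde H$ with the property that the slice $\tilde H\iota_n$ issues from $\tilde\ast$ for one $n$ — hence, by star-injectivity in the $L$-direction together with the constancy of the initial points of the paths $H\iota_n$, for every $n$. Then $\tilde H\iota_n$ is the $\tilde\ast$-lift of $H\iota_n$ for each $n$, so it equals $\tilde\omega$ for $n$ small and $\tilde\omega'$ for $n$ large; and since ${\rm ter}(\tilde H\iota_n)$ lies in $p^{-1}(\ast)$ for all $n$ and consecutive values agree by star-injectivity, $n \mapsto {\rm ter}(\tilde H\iota_n)$ is constant, giving ${\rm ter}(\tilde\omega) = {\rm ter}(\tilde\omega')$.

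For bijectivity: surjectivity is immediate because $\E_n(R)/H_B$ is path-connected (Theorem \ref{thm:model-special-case}), so every $y \in p^{-1}(\ast)$ is the terminal point of a path $\tilde\omega$ from $\tilde\ast$, and then $\omega = p\tilde\omega$ is a loop with $\Phi([\omega]) = y$. For injectivity I would use that $\E_n(R)/H_B$ is simply connected and path-connected, so by Corollary \ref{cor:omegaomegainverse} any two of its paths with the same endpoints are stably homotopic rel endpoints; applied to the $\tilde\ast$-lifts $\tilde\omega, \tilde\omega'$ of $\omega, \omega'$ having the common terminal point $\Phi([\omega]) = \Phi([\omega'])$, this gives $\tilde\omega \simeq \tilde\omega'$, and pushing the homotopy forward along $p$ gives $\omega \simeq \omega'$. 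Finally $\Phi$ is base-point preserving, since the $\tilde\ast$-lift of the constant loop is the constant path. With the identification $p^{-1}(\ast) = \EP_n(R)/H_B$ provided by the normality of $H_B$ in $\EP_n(R)$, one can upgrade $\Phi$ to a group isomorphism via deck transformations, though only the pointed bijection is used afterwards.

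The main obstacle, as usual in this theory, is not the formal covering-space argument but the lifting package behind it: one needs that $L$ and $L \times L$ are simply connected, so that Lemma \ref{lem:lift} applies unconditionally, and one needs that lifts of paths and of stable homotopies of paths remain, respectively, paths and stable homotopies of paths. Both of these collapse to the single point that a covering morphism is injective on every star — an immediate consequence of Definition \ref{defn:covering} — after which the rest is a faithful copy of the topological proof.
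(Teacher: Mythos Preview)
Your argument is essentially correct and is the standard covering-space proof that the fibre of a universal cover over the base point is in bijection with the fundamental group. The paper, by contrast, does not prove this corollary at all: its entire proof is the single line ``See \cite{Bak}, Theorem~10.17,'' so what you have written is in effect a reconstruction of the content behind that citation. Two small points: first, your appeal to Corollary~\ref{cor:omegaomegainverse} in the injectivity step is imprecise---that corollary only gives $\omega\cdot\omega^{-1}\simeq\mathrm{const}$, whereas you need the stronger (but standard) consequence that in a \emph{simply connected} space any two paths with the same endpoints are stably homotopic, which one obtains by combining simple connectivity with Corollary~\ref{cor:omegaomegainverse} and the $\square$-compatibility of Definition~\ref{defn:pathhomotopy}. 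Second, you silently use that $L$ and $L\times L$ are simply connected so that Lemma~\ref{lem:lift} applies unconditionally; this is true and elementary, but is not proved in the present paper, so strictly speaking it is another forward reference to \cite{Bak}.
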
 
\begin{proof}
See \cite{Bak}, Theorem $10.17.$ 
\end{proof}

We can view the fault line between algebra and topology more clearly. 
This helps us to compute the fundamental group of $\EUm_n(R)$ explicitly. 
The notion of universal cover is clear: Given $\EUm_n(R)$, a universal cover 
$X \rightarrow \EUm_n(R)$ is a cover such that given any other cover $X' \rightarrow \EUm_n(R)$ there is unique base point 
preserving map $X \rightarrow X'$ making the usual diagram commute. The existence of a universal cover for a single domain 
action has two distinct proofs, one algebraic and the other topological. 
The algebraic proof shows that $\StUm_n(R) \rightarrow \EUm_n(R)$ is a universal cover. 
The topological proof seen above, shows that any connected simply connected cover $Y^{~}$ is universal and explicitly 
constructs one. The universality of the topological and algebraic constructions yields a unique isomorphism
$\StUm_n(R) \rightarrow Y^{~}$ making the usual diagram commute. 

We now record the key observation regarding coverings in coset spaces. This leads us to explicitly computing 
the fundamental group of $\EUm_n(R).$

\begin{prop}\label{prop:two} 
Let $n \geq 3$ be an integer and let $K \subset H$ be subgroups of $\E_n(R)$. 
Then, $p: \E_n(R)/K \rightarrow \E_n(R)/H$ defined by $p(K\varepsilon) = H\varepsilon$ is a 
covering morphism of global actions if and only if $H_2 \subset K$, where 
$H_2 = \langle H \cap x^{-1}(\E_n(R))_{\alpha}(\E_n(R))_{\beta}x \rangle$ 
i.e., $H_2$ is the subgroup of $H$ generated by all elements in $H$ which are also of the form  
$x^{-1}\varepsilon_{\alpha}\varepsilon_{\beta}x$, for some $x \in \E_n(R)$ and some local group elements 
$\varepsilon_{\alpha}, \varepsilon_{\beta}$. 
\end{prop}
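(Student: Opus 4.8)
The plan is to characterize when $p: \E_n(R)/K \to \E_n(R)/H$ is a covering morphism by examining, for each coset $K\varepsilon$, the induced map $p: \operatorname{star}(K\varepsilon) \to \operatorname{star}(H\varepsilon)$ and determining exactly when it is an isomorphism of global actions. Since the local groups and index set are the same upstairs and downstairs, and the $\E_n(R)$-action is transitive on each, it suffices by an equivariance/translation argument to treat the base point coset $K = K\cdot e$ and then transport the conclusion to an arbitrary $K\varepsilon$. So first I would reduce to showing: $p$ is a covering morphism iff for every $\alpha, \beta \in \Phi_n$ the map $p$ restricted to $K(\E_n(R))_\alpha \cup K(\E_n(R))_\beta \to H(\E_n(R))_\alpha \cup H(\E_n(R))_\beta$ is bijective (injectivity is the real content; surjectivity is automatic from surjectivity of $\E_n(R) \to \E_n(R)$, and the local-set and local-group structure is preserved by construction).

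Next I would unwind what injectivity on stars means concretely. The map $p$ fails to be injective on $\operatorname{star}(K\varepsilon)$ precisely when there exist local group elements $\varepsilon_\alpha \in (\E_n(R))_\alpha$ and $\varepsilon_\beta \in (\E_n(R))_\beta$ with $K\varepsilon\varepsilon_\alpha \ne K\varepsilon\varepsilon_\beta$ but $H\varepsilon\varepsilon_\alpha = H\varepsilon\varepsilon_\beta$; equivalently, $\varepsilon\varepsilon_\alpha\varepsilon_\beta^{-1}\varepsilon^{-1} \in H$ but $\notin K$ — i.e. some element of the form $x^{-1}\varepsilon_\alpha\varepsilon_\beta x$ (after relabelling $x = \varepsilon^{-1}$ and absorbing the inverse into the local group, using that local groups are subgroups) lies in $H \setminus K$. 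Thus the obstruction to $p$ being a covering is exactly the existence of an element of $H_2$ not lying in $K$, which gives the equivalence $p$ is a covering $\iff H_2 \subseteq K$. I would be slightly careful here that $\varepsilon_\beta^{-1}$ again lies in $(\E_n(R))_\beta$ since it is a subgroup, and that ranging $x$ over all of $\E_n(R)$ (rather than just coset representatives) does not change the generated subgroup $H_2$.

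The one genuinely delicate point — and the step I expect to be the main obstacle — is verifying that when $H_2 \subseteq K$ the induced map on stars is not merely a bijection of underlying sets but a genuine isomorphism of global actions, i.e. that it respects the index sets $\Phi_{\operatorname{star}(K\varepsilon)}$ and the assignment of local sets, in both directions. Here I would invoke the intersection-preserving property of the standard unipotent subgroups, $(\E_n(R))_{\alpha\cap\beta} = (\E_n(R))_\alpha \cap (\E_n(R))_\beta$, recorded in the discussion of the general linear global action and in Proposition~\ref{prop:intersection}, to control overlaps $xG_\alpha \cap xG_\beta$ of local sets in the star; this ensures the combinatorial data matches. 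The inverse morphism is then forced: injectivity of $p$ on each star lets one define a set-theoretic inverse, and the same intersection property shows it preserves local frames, hence is a morphism of global actions. Once both $p$ and its inverse are checked to be morphisms, $p$ is an isomorphism of stars for every base point, which is precisely the definition (Definition~\ref{defn:covering}) of a covering morphism. Finally, transporting from the base point coset to a general coset $K\varepsilon$ is handled by right translation by $\varepsilon$, which is an automorphism of each global action and carries $H_2$ to a conjugate, completing the argument.
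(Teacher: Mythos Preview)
Your second paragraph is the heart of the matter and is exactly the paper's argument: $p$ fails to be injective on $\operatorname{star}(K\varepsilon)$ precisely when some $\varepsilon\varepsilon_{\alpha}\varepsilon_{\beta}^{-1}\varepsilon^{-1}$ lies in $H\setminus K$, and running over all $\varepsilon$ this is exactly the condition $H_2\not\subset K$. So the equivalence is established just as in the paper.

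Two points in your framing should be dropped, however. First, the reduction to the base coset in your opening paragraph and the transport step at the end are both unnecessary and the latter is incorrect: right translation by $\varepsilon$ on $\E_n(R)/K$ is \emph{not} a morphism of the global action, because the local groups act on the right and $\varepsilon$ does not normalise $(\E_n(R))_{\alpha}$ in general. Fortunately you never actually use this reduction, since your second paragraph already works at an arbitrary $K\varepsilon$; indeed it must, since the conjugating element $x$ in the definition of $H_2$ arises precisely from varying the base coset. Second, your worry in the third paragraph about upgrading bijectivity on stars to an isomorphism of global actions is more than the paper spends on it, and the intersection property is not needed here: both stars carry the same index set $\Phi_n$ and the same local groups $(\E_n(R))_{\alpha}$, and $p$ is visibly $(\E_n(R))_{\alpha}$-equivariant for every $\alpha$. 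Hence $p$ carries each local set $(K\varepsilon)(\E_n(R))_{\alpha}$ onto $(H\varepsilon)(\E_n(R))_{\alpha}$, and once $p$ is bijective on the underlying set its set-theoretic inverse is automatically equivariant as well, so frame-preserving. The paper simply checks injectivity on stars and leaves this implicit.
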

\begin{proof}
It is easy to check that $p$ is a surjective morphism of global actions. We now prove that $p$ is injective on stars if 
$H_2 \subset K$ i.e., we prove that $p: {\rm star}~(K\varepsilon) \rightarrow {\rm star}~(H\varepsilon)$ is injective 
if $H_2 \subset K$. Let $K\varepsilon_1, K\varepsilon_2 \in {\rm star}~(K\varepsilon)$ with $p(K\varepsilon_1) = p(K\varepsilon_2)$ i.e., 
$\varepsilon_2\varepsilon_1^{-1} \in H$. 

As $K\varepsilon_1, K\varepsilon_2 \in {\rm star}~(K\varepsilon)$, there exist local group elements 
$\varepsilon_{\alpha}, \varepsilon_{\beta} \in (\E_n(R))_{\alpha}, (\E_n(R))_{\beta}$ respectively such that 
$K\varepsilon_1 = (K\varepsilon)\varepsilon_{\alpha}$ and 
$K\varepsilon_2 = (K\varepsilon)\varepsilon_{\beta}$. Now $p(K\varepsilon_1) = p(K\varepsilon_2)$ implies 
$\varepsilon \varepsilon_{\alpha}\varepsilon_{\beta}^{-1} \varepsilon^{-1} \in H$ i.e., 
$\varepsilon \varepsilon_{\alpha}\varepsilon_{\beta}^{-1} \varepsilon^{-1} \in H_2$. Thus 
$\varepsilon \varepsilon_{\alpha}\varepsilon_{\beta}^{-1}\varepsilon^{-1}  \in K$, as $H_2 \subset K$. 
This proves $\varepsilon_2(\varepsilon_1)^{-1} \in K$ i.e., $p$ is injective, if $H_2 \subset K.$ 

Conversely suppose that $\E_n(R)/K \rightarrow \E_n(R)/H$ is a covering morphism. 
It is enough to prove that every generator of $H_2$ lies in $K$, as $K$ is a subgroup. 
Let $x \varepsilon_{\alpha} \varepsilon_{\beta}x^{-1} \in H.$ Then $(Kx)\varepsilon^{-1}_{\alpha}, (Kx)\varepsilon_{\beta} \in 
{\rm star}(Kx),$ with $p((Kx)\varepsilon^{-1}_{\alpha}) = p((Kx)\varepsilon_{\beta}).$  Injectivity of $p$ on ${\rm star}(Kx)$ 
implies $Kx\varepsilon^{-1}_{\alpha} = Kx\varepsilon_{\beta},$ i.e., $x \varepsilon_{\alpha} \varepsilon_{\beta}x^{-1} \in K,$ 
proving that every generator of $H_2$ lies in $K.$ 
\end{proof}

\begin{thm}\label{thm:IMP1} 
Let $\EUm_n(R)$ be the connected single domain global action with base point $e\EP_n(R)$.
Then, the simply connected (universal) covering of $\EUm_n(R)$ is $\E_n(R)/(\EP_n(R))_2.$ 
Thus $$\pi_1(\EUm_n(R), e\EP_n(R)) \simeq \EP_n(R)/ (\EP_n(R))_2$$ 

$$\simeq \widetilde{P_n}(R)/B_n(R) \simeq \widetilde{P_n}(R)/(\widetilde{P_n}(R))_2$$  
under the isomorphism induced by the homomorphism $\theta: \St_n(R) \rightarrow \E_n(R).$ 
(see Proposition \ref{prop:intersection}). 
Here $(\EP_n(R))_2$ is defined as in Proposition \ref{prop:two} above and the same Proposition also 
shows that $(\widetilde{P_n}(R))_2 \simeq B_n(R).$ 
\end{thm}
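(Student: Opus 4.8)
The plan is to pin down the simply connected cover of $\EUm_n(R)=\E_n(R)/\EP_n(R)$ concretely as $\E_n(R)/(\EP_n(R))_2$, and then read $\pi_1$ off its fibre. The first observation is that $p\colon \E_n(R)/(\EP_n(R))_2\to\E_n(R)/\EP_n(R)$ is a covering morphism: this is the tautological case $H=\EP_n(R)$, $K=(\EP_n(R))_2$ of Proposition~\ref{prop:two}, since there $H_2=(\EP_n(R))_2=K$. Both coset spaces are path-connected because the local groups $\E_n(R)_\alpha$ generate $\E_n(R)$, which acts transitively on each.

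Next I would compare $\E_n(R)/(\EP_n(R))_2$ with the abstract simply connected cover $\E_n(R)/H_B\to\EUm_n(R)$ provided by Theorem~\ref{thm:model-special-case}, where $H_B\trianglelefteq\EP_n(R)$. Applying Proposition~\ref{prop:two} to this covering gives $(\EP_n(R))_2\subseteq H_B$, so the canonical quotient map $r\colon\E_n(R)/(\EP_n(R))_2\to\E_n(R)/H_B$ over $\EUm_n(R)$ is defined; and since $H_B\subseteq\EP_n(R)$ implies $(H_B)_2\subseteq(\EP_n(R))_2$ (the operation $(\,\cdot\,)_2$ is monotone in the subgroup), Proposition~\ref{prop:two} applies once more to show that $r$ is itself a covering morphism. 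Now a covering morphism onto a path-connected simply connected global action must be an isomorphism: as $\E_n(R)/H_B$ is simply connected, the lifting criterion (Lemma~\ref{lem:lift}) yields a morphism $s\colon\E_n(R)/H_B\to\E_n(R)/(\EP_n(R))_2$ over $\EUm_n(R)$ with $p\circ s=p_B$ (where $p_B$ is the covering map of $\E_n(R)/H_B$); then $r\circ s$ and the identity both lift $p_B$ along $p_B$, and $s\circ r$ and the identity both lift $p$ along $p$, so by the uniqueness clause of Lemma~\ref{lem:lift} we get $r\circ s=\mathrm{id}$ and $s\circ r=\mathrm{id}$. Hence $r$ is an isomorphism; being the canonical quotient map it forces $(\EP_n(R))_2=H_B$ (so in particular $(\EP_n(R))_2\trianglelefteq\EP_n(R)$), and $\E_n(R)/(\EP_n(R))_2$ is the essentially unique simply connected cover of $\EUm_n(R)$.

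With the universal cover identified, Corollary~\ref{cor:Fiber} gives $\pi_1(\EUm_n(R),e\EP_n(R))\simeq p^{-1}(e\EP_n(R))=\{(\EP_n(R))_2\varepsilon\mid\varepsilon\in\EP_n(R)\}=\EP_n(R)/(\EP_n(R))_2$, the last step using normality. For the Steinberg description I would first check, by unwinding definitions, that $(\widetilde{P_n}(R))_2=\B_n(R)$: a generator $x^{-1}(ab)x\in\theta^{-1}(\EP_n(R))$ of $\B_n(R)$, with $a\in\St_n(R)_\alpha$ and $b\in\St_n(R)_\beta$, is exactly an element of $\widetilde{P_n}(R)\cap x^{-1}\St_n(R)_\alpha\St_n(R)_\beta x$, and conversely, so the two generating families coincide. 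Since $\StUm_n(R)=\St_n(R)/\B_n(R)\to\EUm_n(R)$ is also a universal cover (the algebraic construction), Corollary~\ref{cor:Fiber} likewise gives $\pi_1(\EUm_n(R))\simeq\widetilde{P_n}(R)/\B_n(R)$. Finally, $\theta\colon\St_n(R)\to\E_n(R)$ is an isomorphism on local groups (Proposition~\ref{prop:intersection}) and sends $\B_n(R)$ into $(\EP_n(R))_2$, so it descends to a base-point-preserving morphism $\bar\theta\colon\St_n(R)/\B_n(R)\to\E_n(R)/(\EP_n(R))_2$ over $\EUm_n(R)$; by uniqueness of the universal cover (Corollary~\ref{cor:uniqueuniversal}) $\bar\theta$ is an isomorphism, which restricted to the fibres over the base point is precisely the $\theta$-induced isomorphism $\widetilde{P_n}(R)/\B_n(R)\simeq\EP_n(R)/(\EP_n(R))_2$ (and incidentally forces $\ker\theta\subseteq\B_n(R)$). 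Combining the four identifications proves the theorem.

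I expect the main obstacle to be the middle step: showing $\E_n(R)/(\EP_n(R))_2$ is simply connected, i.e. that the canonical covering morphism $r$ onto the abstract simply connected cover of Theorem~\ref{thm:model-special-case} is an isomorphism. Its success rests on having Proposition~\ref{prop:two} usable in both directions — to recognize $r$ as a covering morphism — together with the uniqueness part of the lifting criterion. The remaining points (that $p$ and $\bar\theta$ are morphisms of global actions, path-connectedness of the coset spaces, and the bookkeeping matching fibres with quotient groups) are routine.
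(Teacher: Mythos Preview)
Your proposal is correct and follows essentially the same approach as the paper: both establish that $\E_n(R)/(\EP_n(R))_2\to\E_n(R)/H_B$ is a covering via Proposition~\ref{prop:two} and the monotonicity $(H_B)_2\subseteq(\EP_n(R))_2$, then identify it with the universal cover and read off $\pi_1$ from the fibre via Corollary~\ref{cor:Fiber}. The only minor variation is that the paper concludes simple connectedness of $\E_n(R)/(\EP_n(R))_2$ from injectivity of the induced map on $\pi_1$ (into the trivial group $\pi_1(\E_n(R)/H_B)$), whereas you build an explicit inverse to $r$ using the uniqueness clause of the lifting criterion; you also spell out the Steinberg-side identification $(\widetilde{P_n}(R))_2=\B_n(R)$ and the role of $\bar\theta$ more carefully than the paper does.
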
 
\begin{proof}
Let the simply connected covering of $\EUm_n(R)$ be given by 
$\E_n(R)/H_{B},$ where $H_{B}$ is a normal subgroup of $\EP_n(R).$ 
Using Proposition \ref{prop:two}, it is easy to check that $\E_n(R)/(\EP_n(R))_2$ is a covering of 
$\E_n(R)/H_{B},$ by observing that $(H_{B})_2 \subset (\EP_n(R))_2$.   

We now prove that $\E_n(R)/(\EP_n(R))_2$ is a simply connected covering of $\E_n(R)/H_{B}.$ 
For this note that $p_{*}(\pi_1(\E_n(R)/(\EP_n(R))_2)) \subset \pi_1(\E_n(R)/H_{B})$, which is trivial as 
$\E_n(R)/H_{B}$ is a simply connected covering of $\EUm_n(R)$. Now as $p_{*}$ is injective,
one has $\pi_1(\E_n(R)/(\EP_n(R))_2)$ is trivial, thus showing that $\E_n(R)/(\EP_n(R))_2$ 
is another simply connected covering of the global action $\EUm_n(R)$. By Corollary \ref{cor:uniqueuniversal} 
we have $\E_n(R)/(\EP_n(R))_2 \simeq \E_n(R)/H_B$ is the universal, path-connected, simply-connected covering of the single 
domain global action $\EUm_n(R)$. Using Corollary \ref{cor:Fiber}, we see that 
$$\pi_1(\EUm_n(R), e\EP_n(R)) \simeq p^{-1}(e\EP_n(R)) = \EP_n(R)/ (\EP_n(R))_2$$ 
$$\simeq \widetilde{P_n}(R)/B_n(R) \simeq  \widetilde{P_n}(R)/(\widetilde{P_n}(R))_2.$$  
\end{proof}

\section{Stability in $K$-theory and fundamental group of unimodular row global action} 

In this section we construct certain exact sequences of pointed sets. Under suitable conditions on the stable rank   
of the ring under consideration these exact sequences of pointed sets turn out to be exact 
sequences of groups. The sandwiching of $\pi_1(\EUm_n(R))$ in this exact sequence of groups helps us to conclude 
(in certain situations) about vanishing of $\pi_1(\EUm_n(R))$. This matches with the well-known algebraic results. 

\subsection{Exact sequences for path-connected and fundamental group functors of unimodular row global action}
In this section we construct exact sequences for path-connected and fundamental group functors of the unimodular 
row global action. Let $\K^s_{0,m}(R)$ be the set of all isomorphism classes of finitely generated projective modules 
$P$ such that for some $r$ (depending on $P$) $P \oplus R^r \simeq R^{m+r}$. 
The base point of $\K^s_{0,m}(R)$ is the isomorphism class of $R^m$. 

\begin{prop}\label{prop:exact-K0-K1} 
Let $R$ be a ring and $n \geq 3$ be an integer. Then, the following exact sequences of pointed sets exist: 
\begin{itemize} 
\item[{1.}] $$\K_{1,n}(R) \xrightarrow{\alpha}  \pi_0(\Um_n(R)) \xrightarrow{\beta} \K^s_{0,n-1}(R) 
\xrightarrow{\gamma} \K^s_{0,n}(R)$$
where the base point of $\K_{1, n}(R)$ is $[I_n]$ and the base point of $\pi_0(\Um_n(R))$ is $[e]$. 

\item[{2.}] $$(\K_{2,n}(R))_2 \xrightarrow{\delta} \K_{2,n}(R) \xrightarrow{\eta} \pi_1(\EUm_n(R)) 
\xrightarrow{\mu} \K_{1,n-1}(R)/(\K_{1,n-1}(R))_2 \xrightarrow{\lambda}$$ 
$$\K_{1,n}(R)$$ 

By definition, $$(\K_{2,n}(R))_2 = \K_{2,n}(R) \cap (\widetilde{P_n}(R))_2;$$  
$$(\K_{1,n-1}(R))_2 = (\GL_{n-1}(R) \cap (\EP_{n}(R))_2)/\E_{n-1}(R),$$
where $(\widetilde{P_n}(R))_2, (\EP_n(R))_2$ are defined 
analogous to $H_2$ in Proposition \ref{prop:two}. 
\end{itemize} 
\end{prop}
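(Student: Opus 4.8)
The plan is to write down all seven arrows explicitly, check that each is well defined and base-point preserving, and then verify exactness at each interior node. For sequence~(1), recall that $\K_{1,n}(R)$ is the pointed set $\GL_n(R)/\E_n(R)$ of $\E_n(R)$-orbits based at $[I_n]$, and that $\pi_0(\Um_n(R))=\Um_n(R)/\E_n(R)$ is based at $[e]$. I would take $\alpha$ to be induced by the first-row map $g\mapsto eg$, which descends to orbit spaces since $e(g\varepsilon)=(eg)\varepsilon$ and sends $[I_n]$ to $[e]$; take $\beta$ to send $[v]$ to $[\ker\varphi_v]$, where $\varphi_v\colon R^n\to R$, $\varphi_v(e_i)=v_i$, is split surjective by unimodularity, so that $\ker\varphi_v\oplus R\cong R^n$ and $[\ker\varphi_v]\in\K^s_{0,n-1}(R)$, while $\ker\varphi_{v\varepsilon}\cong\ker\varphi_v$ and $\ker\varphi_e\cong R^{n-1}$; and take $\gamma$ to send $[P]$ to $[P\oplus R]$. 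For exactness at $\pi_0(\Um_n(R))$ I would invoke the classical fact that $\ker\varphi_v$ is free iff $v$ is the first row of an element of $\GL_n(R)$, that is, iff $[v]\in{\rm image}(\alpha)$; for exactness at $\K^s_{0,n-1}(R)$ I would use that $P\oplus R\cong R^n$ iff $P\cong\ker\varphi_v$ for some $v\in\Um_n(R)$, the reverse implication coming from composing an isomorphism $R^n\xrightarrow{\ \sim\ }P\oplus R$ with the projection to $R$, a split surjection and hence a unimodular row, whose kernel is $\cong P$.

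For sequence~(2) I would work throughout with the two models $\pi_1(\EUm_n(R))\simeq\widetilde{P_n}(R)/\B_n(R)\simeq\EP_n(R)/(\EP_n(R))_2$ provided by Theorem~\ref{thm:IMP1}; this gives in particular $(\widetilde{P_n}(R))_2=\B_n(R)$, $\theta_n(\B_n(R))=(\EP_n(R))_2$, and $\theta_n^{-1}\bigl((\EP_n(R))_2\bigr)=\B_n(R)$. The concrete input I would record first is that $(\EP_n(R))_2$ contains every $E_{ij}(r)$ with $i>1$: each such matrix is a single local-group element already lying in $\EP_n(R)$, hence lies in $(\EP_n(R))_2$ by the definition of the $H_2$-construction of Proposition~\ref{prop:two} (take the second local element and the conjugating element to be trivial). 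Consequently $(\EP_n(R))_2$ contains every $\bigl(\begin{smallmatrix}1&0\\ v&I\end{smallmatrix}\bigr)$ and all of $\E_{n-1}(R)$, so that $\bigl(\begin{smallmatrix}1&0\\ v&\tau\end{smallmatrix}\bigr)\in(\EP_n(R))_2\iff{\rm diag}(1,\tau)\in(\EP_n(R))_2\iff[\tau]\in(\K_{1,n-1}(R))_2$, the last equivalence being the definition of $(\K_{1,n-1}(R))_2$. I would then set $\delta$ to be the inclusion $(\K_{2,n}(R))_2=\K_{2,n}(R)\cap(\widetilde{P_n}(R))_2\hookrightarrow\K_{2,n}(R)$; $\eta$ the composite $\K_{2,n}(R)=\ker\theta_n\hookrightarrow\widetilde{P_n}(R)\twoheadrightarrow\widetilde{P_n}(R)/\B_n(R)$; $\mu$ the map induced on $\widetilde{P_n}(R)/\B_n(R)$ by $\widetilde{P_n}(R)\xrightarrow{\theta_n}\EP_n(R)\to\GL_{n-1}(R)\to\K_{1,n-1}(R)/(\K_{1,n-1}(R))_2$, the middle arrow sending $\bigl(\begin{smallmatrix}1&0\\ v&\tau\end{smallmatrix}\bigr)$ to $\tau$; and $\lambda$ the map induced by the stabilisation $\GL_{n-1}(R)\hookrightarrow\GL_n(R)$, $\tau\mapsto{\rm diag}(1,\tau)$. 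Well-definedness of $\mu$ is exactly the displayed chain of equivalences (the image of $\B_n(R)$, equivalently of $(\EP_n(R))_2$, in $\K_{1,n-1}(R)$ equals $(\K_{1,n-1}(R))_2$), and that of $\lambda$ holds because $(\K_{1,n-1}(R))_2$ is represented by matrices lying in $\E_n(R)$.

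Exactness of~(2) I would verify node by node. At $\K_{2,n}(R)$: $\ker\eta=\K_{2,n}(R)\cap\B_n(R)=\K_{2,n}(R)\cap(\widetilde{P_n}(R))_2=(\K_{2,n}(R))_2={\rm image}(\delta)$, immediate from the definitions. At $\K_{1,n-1}(R)/(\K_{1,n-1}(R))_2$: since $\theta_n$ maps $\widetilde{P_n}(R)$ onto $\EP_n(R)$, the image of $\mu$ is the set of classes $[\tau]$ with ${\rm diag}(1,\tau)\in\E_n(R)$, taken modulo $(\K_{1,n-1}(R))_2$; that is, it equals $\ker\bigl(\K_{1,n-1}(R)\to\K_{1,n}(R)\bigr)/(\K_{1,n-1}(R))_2$, which is precisely the preimage under $\lambda$ of the base point. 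At $\pi_1(\EUm_n(R))$: for $x\in\widetilde{P_n}(R)$ with $\theta_n(x)=\bigl(\begin{smallmatrix}1&0\\ v&\tau\end{smallmatrix}\bigr)$ one has that $\mu(x\B_n(R))$ is the base point $\iff[\tau]\in(\K_{1,n-1}(R))_2\iff\theta_n(x)\in(\EP_n(R))_2\iff x\in\theta_n^{-1}\bigl((\EP_n(R))_2\bigr)$, so $\ker\mu=\theta_n^{-1}\bigl((\EP_n(R))_2\bigr)\cdot\B_n(R)/\B_n(R)=\ker\theta_n\cdot\B_n(R)/\B_n(R)={\rm image}(\eta)$, the middle equality coming from $\theta_n(\B_n(R))=(\EP_n(R))_2$.

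The only non-formal ingredients for sequence~(1) are the two classical module-theoretic equivalences quoted above. For sequence~(2), granted the identifications of Theorem~\ref{thm:IMP1}, everything reduces to one computational point and one bookkeeping point. The computational point, which I expect to be the main obstacle, is to pin down $(\EP_n(R))_2$ explicitly enough to see both that it contains all $E_{ij}(r)$ with $i>1$ and that its image in $\K_{1,n-1}(R)$ equals $(\K_{1,n-1}(R))_2$; this is where the exactness is genuinely sensitive to the definition of the $H_2$-subgroups. The bookkeeping point is to keep every arrow base-point preserving and to recall that from the fourth term onward the sequence is merely one of pointed sets, so that ``exact'' means ``image equals preimage of the base point'' throughout.
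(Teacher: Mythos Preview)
Your proposal is correct and follows essentially the same approach as the paper: the maps $\alpha,\beta,\gamma,\delta,\eta,\mu,\lambda$ are defined identically, sequence~(1) is handled by the same classical module-theoretic facts (the paper defers to \cite{SV}), and sequence~(2) is verified node by node using the identifications of Theorem~\ref{thm:IMP1}. The only stylistic difference is that you front-load the observation that $(\EP_n(R))_2$ contains every $E_{ij}(r)$ with $i>1$ and deduce the clean equivalence $\bigl(\begin{smallmatrix}1&0\\ v&\tau\end{smallmatrix}\bigr)\in(\EP_n(R))_2\iff[\tau]\in(\K_{1,n-1}(R))_2$, which lets you argue exactness at $\pi_1(\EUm_n(R))$ via $\theta_n^{-1}((\EP_n(R))_2)=\ker\theta_n\cdot\B_n(R)$ rather than by the paper's explicit construction of a preimage $X'\in\K_{2,n}(R)$; the two arguments are equivalent, yours being the group-theoretic abstraction of the paper's matrix computation.
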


\begin{proof} 
\begin{itemize}
\item[{1.}] Define $\alpha: \K_{1,n}(R) \rightarrow \pi_0(\Um_n(R))$ by $\sigma \mapsto (e\sigma)\E_n(R).$ Clearly this map preserves
base points. For defining $\beta: \pi_0(\Um_n(R)) \rightarrow \K^s_{0,n-1}(R)$ note that given $v \in \Um_n(R),$ 
there exists a natural surjective map $\beta_{v}: R^{n} \rightarrow R$ defined by $\beta_{v}(w) = w \cdot v^{t}$ with 
${\rm ker}\beta_{v} \oplus R \simeq R^{n}$ i.e., $[{\rm ker}\beta_{v}] \in \K^s_{0,n-1}(R).$ Define 
$\beta:\pi_0(\Um_n(R)) \rightarrow \K^s_{0,n-1}(R)$ by $\beta([v]) = [{\rm ker}\beta_{v}].$ This is a 
well-defined base-point preserving map. (For details see \cite{SV}.) 
That it is an exact sequence of pointed sets follows using ideas as in Lemma $1.3$ in \cite{SV}. 

\item[{2.}] We first define the maps $\delta, \eta, \mu, \lambda.$ 
\begin{itemize} 
\item[{2(a)}] The map $\delta$ is the natural inclusion map. 

\item[{2(b)}] Define $\eta: K_{2, n}(R) \rightarrow \pi_1(\EUm_n(R))$ by $\eta(Y) = Y(\widetilde{P_n}(R))_2.$ 

\item[{2(c)}] To define the map 
$\mu: \pi_1(\EUm_n(R)) \rightarrow \K_{1,n-1}(R)/(\K_{1,n-1}(R))_2,$ recall that 
$\pi_1(\EUm_n(R)) \simeq \EP_n(R)/(\EP_n(R))_2,$ via the standard homomorphism 
$\theta_n: \St_n(R) \rightarrow \GL_n(R).$
Given $\sigma \in \EP_n(R),$ there exists $\tau \in \GL_{n-1}(R)$ such that 
$\sigma = \begin{pmatrix} 
1 & 0 \\
v & \tau 
\end{pmatrix}
\in \E_n(R).$ Note that $\tau \in \GL_{n-1}(R) \cap \E_n(R)$ defines an element $\tau\E_{n-1}(R)$ of $K_{1, n-1}(R),$ 
which we denote by $[\sigma_{rd}],$ the class of the right diagonal of $\sigma.$ It is easy to check that this 
map is well-defined. 

 Define $\mu(Y(\widetilde{P_n}(R))_2) =[(\theta_n(Y))_{rd}](\K_{1,n-1}(R))_2.$ As 
$\mu(I_n(\widetilde{P_n}(R))_2) =[(\theta(I_n))_{rd}](\K_{1,n-1}(R))_2 = [I_{n-1}](\K_{1,n-1}(R))_2,$ we have  
$\mu(\widetilde{P_n}(R))_2 \subset (\K_{1,n-1}(R))_2.$ 

\item[{2(d)}] The map $\lambda: \K_{1,n-1}(R)/(\K_{1,n-1}(R))_2 \rightarrow \K_{1,n}(R)$ is the natural one induced by the 
right diagonal inclusion of $\GL_{n-1}(R)$ inside $\GL_n(R)$  i.e., the map given by 
$[\tau](\K_{1,n-1}(R))_2 \mapsto \begin{pmatrix} 
1 & 0 \\
0 & \tau 
\end{pmatrix}\E_n(R).$ This map is well-defined: 
for if $[\tau] = [\tau'],$ then $\tau \tau'^{-1} \in (\K_{1,n-1}(R))_2$ i.e., 
$\tau \tau'^{-1} \in \GL_{n-1}(R)$ and 
$\begin{pmatrix} 
1 & 0 \\
0 & \tau \tau'^{-1}
\end{pmatrix} \in (\EP_n(R))_2 \subset \E_n(R).$ Thus, $\begin{pmatrix} 
1 & 0 \\
0 & \tau 
\end{pmatrix}\E_n(R) = \begin{pmatrix} 
1 & 0 \\
0 & \tau' 
\end{pmatrix}\E_n(R).$ 
\end{itemize} 

\item[{3.}] Having defined the maps, we first check that we get a complex. 
\begin{itemize} 
\item[{3(a)}] Note that for $Y \in (\K_{2,n}(R))_2,$
$(\eta \circ \delta)(Y) = \eta(Y) = Y(\widetilde{P_n}(R))_2) = e(\widetilde{P_n}(R))_2),$ as $Y \in (\widetilde{P_n}(R))_2).$ Thus, $\eta \circ \delta = e.$ 

\item[{3(b)}] For $Z \in \K_{2,n}(R)$, consider 
$(\mu \circ \eta)(Z) = \mu(Z(\widetilde{P_n}(R))_2)) = [\theta_n(Z)_{rd}] = [I_n],$ as $Z \in \K_{2,n}(R).$ 
This proves that $\mu \circ \eta = e.$ 

\item[{3(c)}] One also has that $\lambda \circ \mu = e,$ as 
$(\lambda \circ \mu)Z(\widetilde{P_n}(R))_2 = \lambda(\mu(Y(\EP_n(R))_2)) = \lambda([Y_{rd}])$ 
for some $Y \in \EP_n(R)$ via the identification of $\pi_1(\EUm_n(R))$ with the orbit space $\EP_n(R)/(\EP_n(R))_2.$ 
Now $\lambda([Y_{rd}]) = 
[\begin{pmatrix} 
1 & 0 \\
0 & Y_{rd}
\end{pmatrix}]$ $= [I_n] \in \K_{1, n}(R),$ as $Y \in \EP_n(R).$  
\end{itemize} 

\item[{4.}] Now we check exactness at each place: 

\begin{itemize} 

\item[{4(a)}] We check ${\rm ker}~\eta \subset {\rm Im}\delta.$ Let $Y \in \K_{2, n}(R) \in {\rm ker}~\eta.$ 
Hence, $Y \in \widetilde{P_n}(R)_2 \cap \K_{2, n}(R),$ which by definition is $(\K_{2, n}(R))_2.$ Thus, $Y$ belongs 
to ${\rm Im}\delta.$

\item[{4(b)}] We now check that ${\rm ker}~\mu \subset {\rm Im}\eta.$ Let $Y(\widetilde{P_n}(R))_2 \in {\rm ker}~\mu.$
Then, $\mu(Y(\widetilde{P_n}(R))_2) = [(\theta_nY)_{\rm rd}](\K_{1,n-1}(R))_2 = 
[I_n](\K_{1,n-1}(R))_2,$ which implies $(\theta_nY)_{\rm rd} \in (\K_{1,n-1}(R))_2.$ Now we write 
$\begin{pmatrix} 
1 & 0 \\
0 & (\theta_nY)_{\rm rd} 
\end{pmatrix}$
$= \prod_{(i,j)}\varepsilon_{ij}\varepsilon',$ where $\varepsilon_{ij} \in \E_n(R)$ are elementary 
generators and $\varepsilon' \in \E_{n-1}(R).$ Breaking up $\varepsilon'$ further into a product 
of elementary generators, we get that$\begin{pmatrix} 
1 & 0 \\
0 & (\theta_nY)_{\rm rd} 
\end{pmatrix}$
$= \prod_{(i', j')} \varepsilon_{i'j'},$ with $\varepsilon_{i'j'} \in \E_n(R).$  Thus, $X:= \theta_n^{-1}(\begin{pmatrix} 
1 & 0 \\
0 & (\theta_nY)_{\rm rd} 
\end{pmatrix})$ makes sense as an element of $\St_n(R).$ Let $X' = \theta_n^{-1}((\prod_{(i', j')} \varepsilon_{i'j'})^{-1}) \cdot X$ Then, clearly 
$\theta_n(X') =  ((\prod_{(i', j')} \varepsilon_{i'j'})^{-1})\cdot \theta_n(X)  = I_n,$ i.e.,
$\theta_n(X') \in \K_{2, n}(R).$ We claim that $\eta(X') = Y(\widetilde{P_n}(R))_2$ i.e., 
$X'(\widetilde{P_n}(R))_2 = Y(\widetilde{P_n}(R))_2$ i.e., $X'Y^{-1} \in (\widetilde{P_n}(R))_2.$ 
We prove $\theta_n(X'Y^{-1}) \in (\EP_n(R))_2,$ from which it follows that 
$X'Y^{-1} \in (\widetilde{P_n}(R))_2,$ proving that $X'(\widetilde{P_n}(R))_2 = Y(\widetilde{P_n}(R))_2,$
as required. For this note that 
$\theta_n(Y) \in \EP_n(R)$ and write 
$\theta_n(Y) = 
\begin{pmatrix}
1 & 0 \\
v & (\theta_n(Y))_{\rm rd}
\end{pmatrix} = \begin{pmatrix} 
1 & 0 \\
v & I_{n-1}
\end{pmatrix} 
\begin{pmatrix} 
1 & 0 \\
0 & (\theta_n(Y))_{\rm rd}
\end{pmatrix}$ 

Then, 
$\theta_n(X'Y^{-1})$ 
$$= (\prod_{(i', j')} \varepsilon_{i'j'})^{-1}) \cdot \begin{pmatrix} 
1 & 0 \\
0 & (\theta_nY)_{\rm rd} 
\end{pmatrix} \cdot \begin{pmatrix} 
1 & 0 \\
0 & ((\theta_n(Y))_{\rm rd})^{-1}
\end{pmatrix}\cdot \begin{pmatrix} 
1 & 0 \\
-v & I_{n-1}
\end{pmatrix}.$$ Thus, 
$\theta_n(X'Y^{-1}) = \begin{pmatrix} 
1 & 0 \\
-v & I_{n-1}
\end{pmatrix}.$ That this clearly lies in $(\EP_n(R))_2,$ can be seen by writing it as a product of elementary generators 
of the type $\E_{i1}(v_i),$ where $v = (v_2, \cdots, v_n)^t.$ 

\item[{4(c)}] We check ${\rm ker}~\lambda \subset {\rm Im}\mu.$ Let $[Z](\K_{1,n-1}(R))_2 \in {\rm ker}~\lambda,$ 
i.e., $\begin{pmatrix} 
1 & 0 \\
0 & Z 
\end{pmatrix} \in \E_n(R),$ with $Z \in \K_{1, n-1}(R).$ Then, clearly $\mu(\theta_n^{-1}\widetilde{P_n}(R))_2 = 
\begin{pmatrix} 
1 & 0 \\
0 & Z 
\end{pmatrix},$ proving that ${\rm ker}~\lambda \subset {\rm Im}\mu.$

\end{itemize} 

\end{itemize} 
\end{proof} 

\begin{prop}\label{prop:final} 
Let $R$ be a ring and let $n \geq 3$ be an integer.Then, 
$$(\K_{2,n}(R))_2 \xrightarrow{\delta} \K_{2,n}(R) \xrightarrow{\eta} \pi_1(\EUm_n(R)) \xrightarrow{\mu} 
\K_{1,n-1}(R)/(\K_{1,n-1}(R))_2  \xrightarrow{\lambda} $$ 
$$\K_{1,n}(R) \xrightarrow{\alpha}  \pi_0(\Um_n(R)) \xrightarrow{\beta} \K^s_{0,n-1}(R) \xrightarrow{\gamma} \K^s_{0,n}(R).$$
is an exact sequence of pointed sets. 
\end{prop}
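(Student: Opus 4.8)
The plan is to obtain the eight-term sequence by concatenating the two exact sequences furnished by Proposition~\ref{prop:exact-K0-K1}: sequence~(2) ends at $\K_{1,n}(R)$ via the map $\lambda$, and sequence~(1) starts at $\K_{1,n}(R)$ via the map $\alpha$. Exactness at each of $\K_{2,n}(R)$, $\pi_1(\EUm_n(R))$ and $\K_{1,n-1}(R)/(\K_{1,n-1}(R))_2$ is already contained in Proposition~\ref{prop:exact-K0-K1}(2), exactness at $\pi_0(\Um_n(R))$ and $\K^s_{0,n-1}(R)$ is contained in Proposition~\ref{prop:exact-K0-K1}(1), and the two outermost terms $(\K_{2,n}(R))_2$ and $\K^s_{0,n}(R)$ carry no exactness requirement. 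Hence the only assertion that is genuinely new is exactness at the splice point $\K_{1,n}(R)$, that is, $\operatorname{Im}(\lambda) = \ker(\alpha)$, where ``$\ker(\alpha)$'' denotes the preimage of the base point (recall that $\K_{1,n}(R)$ and $\pi_0(\Um_n(R))$ are in general only pointed sets).

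First I would check the inclusion $\operatorname{Im}(\lambda) \subseteq \ker(\alpha)$, i.e. that $\alpha \circ \lambda$ is the constant map at the base point. For a class represented by $\tau \in \GL_{n-1}(R)$ one has $\lambda([\tau]) = \begin{pmatrix} 1 & 0 \\ 0 & \tau \end{pmatrix}\E_n(R)$, and since $e = (1,0,\dots,0)$ is fixed by every matrix of the shape $\begin{pmatrix} 1 & 0 \\ 0 & \tau \end{pmatrix}$, we get $\alpha(\lambda([\tau])) = \bigl(e\begin{pmatrix} 1 & 0 \\ 0 & \tau \end{pmatrix}\bigr)\E_n(R) = e\E_n(R)$, the base point of $\pi_0(\Um_n(R))$.

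Next I would prove the reverse inclusion $\ker(\alpha) \subseteq \operatorname{Im}(\lambda)$, which is the one substantive step. Let $\sigma\E_n(R) \in \K_{1,n}(R)$ satisfy $\alpha(\sigma\E_n(R)) = (e\sigma)\E_n(R) = e\E_n(R)$; then $e\sigma\varepsilon = e$ for some $\varepsilon \in \E_n(R)$, so that $\sigma\varepsilon$ lies in the stabilizer $\P_n(R)$ of $e$ and hence $\sigma\varepsilon = \begin{pmatrix} 1 & 0 \\ v & \tau \end{pmatrix}$ with $v \in \M_{n-1,1}(R)$ and $\tau \in \GL_{n-1}(R)$. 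Factoring $\begin{pmatrix} 1 & 0 \\ v & \tau \end{pmatrix} = \begin{pmatrix} 1 & 0 \\ v & I_{n-1} \end{pmatrix}\begin{pmatrix} 1 & 0 \\ 0 & \tau \end{pmatrix}$ and noting that $\begin{pmatrix} 1 & 0 \\ v & I_{n-1} \end{pmatrix} = \prod_i \E_{i1}(v_i) \in \E_n(R)$ (a product of elementary generators), we obtain in $\K_{1,n}(R) = \GL_n(R)/\E_n(R)$ the equality $\sigma\E_n(R) = \begin{pmatrix} 1 & 0 \\ 0 & \tau \end{pmatrix}\E_n(R) = \lambda\bigl([\tau\E_{n-1}(R)]\bigr)$, the bracket denoting the image of the class $\tau\E_{n-1}(R) \in \K_{1,n-1}(R)$ in $\K_{1,n-1}(R)/(\K_{1,n-1}(R))_2$. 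This gives exactness at $\K_{1,n}(R)$, and together with Proposition~\ref{prop:exact-K0-K1} completes the proof.

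I do not anticipate a serious obstacle here: once the concatenation is set up, only this one new exactness statement must be verified, and it uses nothing beyond the coset descriptions $\K_{1,n}(R) = \GL_n(R)/\E_n(R)$ and $\pi_0(\Um_n(R)) = \Um_n(R)/\E_n(R)$ together with the elementary factorization above. The one place calling for care --- and the reason the two sequences cannot simply be glued mechanically --- is that several of the intervening terms are pointed sets rather than groups, so every argument must be phrased via preimages of base points and orbit representatives, with no appeal to cancellation in a quotient group.
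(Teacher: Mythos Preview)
Your proposal is correct and follows exactly the approach indicated in the paper: the paper's own proof of this proposition consists of the single sentence ``It remains to check exactness at $\K_{1,n}(R)$,'' leaving the verification implicit, whereas you carry it out explicitly. One small point of care in your reverse-inclusion step: from the factorization $\sigma\varepsilon = \begin{pmatrix} 1 & 0 \\ v & I_{n-1} \end{pmatrix}\begin{pmatrix} 1 & 0 \\ 0 & \tau \end{pmatrix}$ you pass directly to $\sigma\E_n(R) = \begin{pmatrix} 1 & 0 \\ 0 & \tau \end{pmatrix}\E_n(R)$, which requires moving the elementary factor across $\begin{pmatrix} 1 & 0 \\ 0 & \tau \end{pmatrix}$; this is legitimate because conjugating $\begin{pmatrix} 1 & 0 \\ v & I_{n-1} \end{pmatrix}$ by the block-diagonal matrix yields $\begin{pmatrix} 1 & 0 \\ \tau^{-1}v & I_{n-1} \end{pmatrix}$, which is again a product of generators $\E_{i1}$, but it is worth saying so since $\E_n(R)$ is not assumed normal in $\GL_n(R)$.
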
 
\begin{proof} 
It remains to check exactness at $\K_{1,n}(R).$
\end{proof} 

We recall the following results regarding injective and surjective stability in the context of commutative 
rings. 

\begin{thm} \label{thm:inj-sur-stability} 
Let $A$ be a commutative and associative ring such that the maximal ideal space of $A$ is a noetherian space of dimension 
$\leq d$ (e.g. $A$ is a noetherian ring of Krull dimension atmost $d$), then: 
\begin{itemize} 
\item[{1.}] The maps $\GL_n(A)/\E_n(A) \rightarrow \K_1(A)$ are isomorphism of groups for all $n \geq d+3$ 
(and for all $n \geq 3$, if $d = 1.$) In particular, for all $n,m \geq d + 3$,  
$\K_{1,n}(R) \rightarrow \K_{1, m}(R)$ is an isomorphism (and for all $n,m \geq 3$, if $d = 1.$). 
The map 
$\K_{1, n}(R) \rightarrow \K_{1,n+1}(R)$ is injective for $n \geq d+2.$ 
(\cite{Bass} and \cite{Vas}.) 

\item[{2.}] For all $n \geq d+2$ the natural map 
$\K_{2, n}(A) \rightarrow \K_2(A)$ is surjective 
and the natural map $\K_{2, n+1}(A) \rightarrow \K_2(A)$ is an isomorphism. 
In particular, the map $\K_{2, n}(A) \rightarrow \K_{2, n+1}(A)$ is surjective 
for all $n \geq d+2.$ (Surjective stability results are of Keith Dennis and 
Vaserstein.) 
If $A$ is a Dedekind ring of 
arithmetic type with infinitely many units (as in Bass-Milnor-Serre \cite{BMS}), 
then the map $\K_{2, 2}(A) \rightarrow \K_{2}(A)$ is surjective 
and the map $\K_{2, 3}(A) \rightarrow \K_{2}(A)$ is bijective. In particular,
$\K_{2, 2}(A) \rightarrow \K_{2, 3}(A)$ is surjective. 
\end{itemize} 
\end{thm}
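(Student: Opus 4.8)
\textbf{Proof plan for Theorem \ref{thm:inj-sur-stability}.}

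The statement here is a recollection of classical injective/surjective stability theorems in $\K$-theory, so the plan is to assemble the proof by citing and lightly adapting the standard sources rather than reproving everything from scratch. First I would treat part (1). The isomorphism $\GL_n(A)/\E_n(A) \xrightarrow{\sim} \K_1(A)$ for $n \geq d+3$ is Bass's stability theorem, and the sharpening to $n\geq 3$ when $d=1$ is due to Vaserstein; I would cite \cite{Bass} and \cite{Vas} and recall that the argument rests on the Bass stable range condition $\operatorname{sr}(A) \leq d+1$ (which holds because the maximal ideal space is noetherian of dimension $\leq d$), together with Whitehead's lemma to handle commutators. The injectivity of $\K_{1,n}(A)\to\K_{1,n+1}(A)$ for $n\geq d+2$ follows from the same circle of ideas: given $\sigma\in\GL_n(A)$ with $\operatorname{diag}(\sigma,1)\in\E_{n+1}(A)$, one uses the stable range condition to absorb the extra coordinate back into $\E_n(A)$. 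The key technical input throughout is that a ring whose maximal spectrum is noetherian of dimension $\leq d$ satisfies $\operatorname{sr}(A)\leq d+1$ — this is exactly what lets one run the Bass--Vaserstein machine.

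For part (2), the surjective stability $\K_{2,n}(A)\to\K_2(A)$ for $n\geq d+2$ and the isomorphism for $n\geq d+3$ are the Dennis--Vaserstein stability results for $\K_2$; I would cite these and recall that the proof proceeds by analyzing the relative Steinberg group and showing, again under the stable range hypothesis, that every element of $\K_2(A)=\ker(\St(A)\to\E(A))$ can be represented in $\St_n(A)$ and that two such representatives differ by an element coming from $\St_{n-1}(A)$. The Dedekind arithmetic-type case with infinitely many units is the Bass--Milnor--Serre congruence subgroup result: there $\K_{2,2}(A)\to\K_2(A)$ is already surjective and $\K_{2,3}(A)\to\K_2(A)$ is bijective, which I would quote directly from \cite{BMS}, noting that the presence of infinitely many units forces the extra collapse at the level $n=2$ via the Mennicke symbol computations.

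The main obstacle — really the only nontrivial point — is that these are not results one should or can reprove in this paper; the honest content of the proof is (a) verifying that the hypothesis ``maximal ideal space noetherian of dimension $\leq d$'' implies the stable range bound $\operatorname{sr}(A)\leq d+1$, and (b) locating the precise numerical bounds in the literature, since different sources state them with slightly different normalizations (e.g. $n\geq d+3$ versus $n > d+2$). I would therefore write the proof as a short paragraph pointing to \cite{Bass}, \cite{Vas}, \cite{BMS} and the Dennis--Vaserstein papers, isolating the stable range implication as the one genuinely verifiable step, and remarking that the $d=1$ improvement for $\K_1$ and the $n=2$ statements for Dedekind arithmetic-type rings are the sharpest known and are special to those settings.
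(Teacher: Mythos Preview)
Your proposal is correct and in fact goes further than the paper does: in the paper this theorem is stated purely as a recollection of known results, with the citations to \cite{Bass}, \cite{Vas}, \cite{BMS}, and Dennis--Vaserstein embedded in the statement itself and no proof text whatsoever. Your plan to isolate the stable range bound $\operatorname{sr}(A)\leq d+1$ as the one verifiable step and then point to the literature is exactly the right way to handle it, and is more informative than what the paper actually provides.
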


We deduce an important Corollary of the result above.

\begin{cor}\label{thm:trivial} 
Let $R$ be a commutative and associative ring such that the maximal 
ideal space of $R$ is a noetherian space of dimension $\leq d$. 
Then, $\pi_1(\EUm_n(R), e\EP_n(R)) = e,$ for all $n \geq d + 3.$ 
\end{cor}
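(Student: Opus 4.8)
The plan is to feed the stability results of Theorem~\ref{thm:inj-sur-stability} into the exact sequence of Proposition~\ref{prop:final} and squeeze $\pi_1(\EUm_n(R))$ between two trivial groups. Recall from Proposition~\ref{prop:final} that for $n \geq 3$ we have an exact sequence of pointed sets
$$\K_{2,n}(R) \xrightarrow{\eta} \pi_1(\EUm_n(R)) \xrightarrow{\mu} \K_{1,n-1}(R)/(\K_{1,n-1}(R))_2 \xrightarrow{\lambda} \K_{1,n}(R),$$
and that by Theorem~\ref{thm:IMP1} the first three terms are groups and $\eta,\mu$ are group homomorphisms. So it suffices to show that for $n \geq d+3$ the map $\eta$ is the trivial homomorphism (equivalently, $\K_{2,n}(R)$ maps to $e$) and that $\mu$ is injective; then $\pi_1(\EUm_n(R)) = \mathrm{Im}(\eta) = e$ directly, without even needing injectivity of $\mu$. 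Actually the cleanest route: show $\eta$ trivial and conclude $\pi_1 = \ker(\mu)$; then show $\mu$ injective.

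First I would handle the left end. By Theorem~\ref{thm:inj-sur-stability}(2), for $n \geq d+2$ the stabilization map $\K_{2,n}(R) \to \K_{2,n+1}(R)$ is surjective, and $\K_{2,n+1}(R) \to \K_2(R)$ is an isomorphism; in particular $\K_{2,n}(R) \to \K_2(R)$ is an isomorphism for $n \geq d+3$ and surjective for $n \geq d+2$. The point to exploit is that $\eta$ kills the image of $\K_{2,n-1}(R) \to \K_{2,n}(R)$: indeed, by the remark in the introduction, $(\K_{2,n}(R))_2$ contains $\mathrm{image}(\K_{2,n-1}(R) \to \K_{2,n}(R))$, and $(\K_{2,n}(R))_2 = \ker(\eta)$ by exactness at $\K_{2,n}(R)$ (Proposition~\ref{prop:exact-K0-K1}, step 4(a)). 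For $n-1 \geq d+2$, i.e. $n \geq d+3$, surjective stability makes $\K_{2,n-1}(R) \to \K_{2,n}(R)$ surjective, so $\ker(\eta) = (\K_{2,n}(R))_2 = \K_{2,n}(R)$, hence $\eta = e$ and $\pi_1(\EUm_n(R)) = \ker(\mu)$.

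Next I would handle the right end, showing $\mu$ is injective, equivalently $\ker(\mu) = e$. By exactness at $\pi_1(\EUm_n(R))$ we have $\ker(\mu) = \mathrm{Im}(\eta) = e$ already from the previous paragraph, so \emph{this step is automatic} and I do not in fact need injective stability for $\K_1$ at all for the vanishing conclusion --- one only needs it to identify where the sequence sits. So the proof reduces to: for $n \geq d+3$, the composite $\K_{2,n-1}(R) \to \K_{2,n}(R)$ is onto (surjective stability), therefore $\eta$ is trivial, therefore $\pi_1(\EUm_n(R)) = \mathrm{Im}(\eta) = e$. One caveat to check carefully: Theorem~\ref{thm:inj-sur-stability}(2) is stated for the absolute groups $\K_{2,n}(A) \to \K_2(A)$ being surjective for $n \geq d+2$ and an isomorphism at $n+1$; I must make sure this genuinely yields surjectivity of the \emph{stabilization} map $\K_{2,n-1}(R) \to \K_{2,n}(R)$ for $n-1 \geq d+2$, which it does by the two-out-of-three property in the commuting triangle $\K_{2,n-1}(R) \to \K_{2,n}(R) \to \K_2(R)$ once $n-1 \geq d+2$ forces $\K_{2,n-1}(R) \twoheadrightarrow \K_2(R)$ and $\K_{2,n}(R) \xrightarrow{\sim} \K_2(R)$.

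The main obstacle is bookkeeping rather than mathematics: pinning down precisely which index ($n$, $n-1$, or $n-2$) the hypothesis $n \geq d+3$ must feed into each stability statement, and confirming that the containment $\mathrm{image}(\K_{2,n-1}(R) \to \K_{2,n}(R)) \subseteq (\K_{2,n}(R))_2$ from Section~5 combined with surjective stability actually forces $(\K_{2,n}(R))_2$ to be all of $\K_{2,n}(R)$. Once that is in hand, the vanishing $\pi_1(\EUm_n(R), e\EP_n(R)) = e$ for $n \geq d+3$ is immediate from exactness of the sequence in Proposition~\ref{prop:final} at the term $\pi_1(\EUm_n(R))$, since its image equals $\mathrm{Im}(\eta)$ and its kernel equals $\mathrm{Im}(\eta)$, both trivial. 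I would also remark, matching the phrasing of the introduction, that this recovers the classical fact that $\Um_n(R)$ is ``simply connected'' in the range $n \geq d+3$, consistent with the known surjective $\K_2$-stability and injective $\K_1$-stability bounds.
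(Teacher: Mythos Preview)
There is a genuine gap: you have confused exactness at $\pi_1(\EUm_n(R))$ with the (false) statement that $\pi_1(\EUm_n(R)) = \mathrm{Im}(\eta)$. Exactness at that spot says only $\ker(\mu) = \mathrm{Im}(\eta)$. So from $\eta$ trivial you correctly deduce $\ker(\mu)=e$, i.e.\ $\mu$ is injective; but an injective $\mu$ merely embeds $\pi_1(\EUm_n(R))$ into $\K_{1,n-1}(R)/(\K_{1,n-1}(R))_2$, it does not kill it. Your sentences ``$\pi_1(\EUm_n(R)) = \ker(\mu)$'' and ``$\pi_1(\EUm_n(R)) = \mathrm{Im}(\eta) = e$'' are simply not consequences of exactness, and the claim that injective $\K_1$-stability is unnecessary is therefore wrong.

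What is missing is control of $\mathrm{Im}(\mu)$. By exactness at the next term, $\mathrm{Im}(\mu)=\ker(\lambda)$, and the paper identifies this with $\ker\bigl(\K_{1,n-1}(R)\to\K_{1,n}(R)\bigr)/(\K_{1,n-1}(R))_2$. For $n\geq d+3$ one has $n-1\geq d+2$, so Theorem~\ref{thm:inj-sur-stability}(1) gives that $\K_{1,n-1}(R)\to\K_{1,n}(R)$ is injective, forcing $\mathrm{Im}(\mu)=e$. Combined with $\ker(\mu)=e$ (your surjective $\K_2$-stability argument, which is fine), this yields $\pi_1(\EUm_n(R))=e$. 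In short, the paper's proof really does use \emph{both} ends of the sandwich --- surjective stability for $\K_2$ on the left and injective stability for $\K_1$ on the right --- and your shortcut drops the right-hand half.
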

\begin{proof}
Note that $n \geq d + 3$ implies that $\pi_0(\Um_n(R)) = e.$ This gives the following exact sequence of groups: 
$$(\K_{2,n}(R))_2 \rightarrow \K_{2,n}(R) \rightarrow \pi_1(\EUm_n(R)) \rightarrow \K_{1,n-1}(R)/(\K_{1,n-1}(R))_2  
\rightarrow $$ $$\K_{1,n}(R) \rightarrow 1.$$  

Noting that $(\K_{2,n}(R))_2$ contains the ${\rm image}(\K_{2,n-1}(R) \rightarrow \K_{2,n}(R))$ (see (\S 5));
we have $$\frac{\K_{2,n}(R)}{(\K_{2,n}(R))_2} \simeq  
\frac{\K_{2,n}(R)/{\rm image}(\K_{2,n-1}(R) \rightarrow \K_{2,n}(R))}
{(\K_{2,n}(R))_2/{\rm image}(\K_{2,n-1}(R) \rightarrow \K_{2,n}(R))}.$$

Thus, $$\frac{\K_{2,n}(R)}{(\K_{2,n}(R))_2} \simeq 
\frac{{\rm cokernel}(\K_{2,n-1}(R) \rightarrow \K_{2,n}(R))}{(\K_{2,n}(R))_2/{\rm image}(\K_{2,n-1}(R) \rightarrow \K_{2,n}(R))}.$$

This now gives the following exact sequence of groups: 
$$e \rightarrow \frac{{\rm cokernel}(\K_{2,n-1}(R) \rightarrow \K_{2,n}(R))}{(\K_{2,n}(R))_2/{\rm image}(\K_{2,n-1}(R) \rightarrow \K_{2,n}(R))} \rightarrow \pi_1(\EUm_n(R)) \rightarrow$$
$$ \frac{{\rm ker}(\K_{1,n-1}(R) \rightarrow \K_{1, n}(R))}{(\K_{1,n-1}(R))_2} \rightarrow e.$$ 

That $\pi_1(\EUm_n(R)) = e$ for all $n \geq d + 3$ now follows from 
Theorem \ref{thm:inj-sur-stability}. 
\end{proof}

\begin{cor} 
Let $A$ be a Dedekind ring of arithmetic type with infinitely many units.
Then, $\pi_1(\EUm_n(R), e\EP_n(R)) = e$ for all $n \geq 3.$ 
\end{cor}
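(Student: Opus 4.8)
The plan is to run the argument of Corollary \ref{thm:trivial} once more, this time with $d=1$ and with the sharpened low–rank stability statements of Theorem \ref{thm:inj-sur-stability} that become available for a Dedekind ring $A$ of arithmetic type with infinitely many units. Such an $A$ is noetherian of Krull dimension $1$, so its maximal ideal space is noetherian of dimension $\leq 1$; hence Corollary \ref{thm:trivial} already gives $\pi_1(\EUm_n(A), e\EP_n(A)) = e$ for every $n \geq d+3 = 4$. It therefore suffices to treat the remaining case $n = 3$.

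For $n=3$ I would start from the exact sequence of pointed sets of Proposition \ref{prop:final} (equivalently Proposition \ref{prop:exact-K0-K1}(2)),
\[
(\K_{2,3}(A))_2 \xrightarrow{\delta} \K_{2,3}(A) \xrightarrow{\eta} \pi_1(\EUm_3(A)) \xrightarrow{\mu} \K_{1,2}(A)/(\K_{1,2}(A))_2 \xrightarrow{\lambda} \K_{1,3}(A),
\]
and show that the two outer contributions are trivial. On the left: by Theorem \ref{thm:inj-sur-stability}(2) the map $\K_{2,2}(A) \to \K_{2,3}(A)$ is surjective, and since $(\K_{2,3}(A))_2$ contains the image of $\K_{2,2}(A) \to \K_{2,3}(A)$ (see \S 5), this forces $(\K_{2,3}(A))_2 = \K_{2,3}(A)$; exactness at $\K_{2,3}(A)$ then makes $\eta$ the trivial map, so exactness at $\pi_1(\EUm_3(A))$ makes $\mu$ injective. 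On the right: by exactness the image of $\mu$ equals $\ker(\lambda)$, and from the description of $\lambda$ (it is induced by the right–diagonal inclusion $\tau \mapsto \mathrm{diag}(1,\tau)$) one reads off $\ker(\lambda) = \ker(\K_{1,2}(A) \to \K_{1,3}(A))/(\K_{1,2}(A))_2$, where $\K_{1,2}(A) \to \K_{1,3}(A)$ is the natural inclusion–induced map. Combining the two, $\pi_1(\EUm_3(A))$ injects into a pointed set whose image is $\ker(\lambda)$, so it remains only to prove
\[
\ker\bigl(\K_{1,2}(A) \to \K_{1,3}(A)\bigr) \subseteq (\K_{1,2}(A))_2 .
\]

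This last inclusion is the heart of the matter, and is exactly where the hypothesis ``infinitely many units'' enters. Concretely one must show: if $\tau \in \GL_2(A)$ and $\mathrm{diag}(1,\tau) \in \E_3(A)$, then $\tau \in \GL_2(A) \cap (\EP_3(A))_2$. Here I would invoke Vaserstein's theorem that $\SL_2(A) = \E_2(A)$ for a Dedekind ring of arithmetic type with infinitely many units — this rests on the congruence subgroup machinery of Bass--Milnor--Serre and Serre, which genuinely requires $A^{*}$ infinite. Since $\mathrm{diag}(1,\tau) \in \E_3(A) = \SL_3(A)$ forces $\det\tau = 1$, we get $\tau \in \SL_2(A) = \E_2(A)$; writing $\tau$ as a product of $2\times 2$ elementary matrices then expresses $\mathrm{diag}(1,\tau)$ as a product of elementary matrices $E_{ij}(r)$ with $i \neq 1$, each of which fixes $e_1$ and is a single local–group element, hence lies in $(\EP_3(A))_2$; as $(\EP_3(A))_2$ is a subgroup, $\mathrm{diag}(1,\tau) \in (\EP_3(A))_2$, as needed. (Equivalently: once $\SL_2(A) = \E_2(A)$ is known, $\K_{1,2}(A) = A^{*} = \K_1(A)$, so $\K_{1,2}(A) \to \K_{1,3}(A)$ is already injective.) The homotopy–theoretic bookkeeping above is routine given Corollary \ref{thm:trivial}; the only real obstacle is supplying this low–rank $\SL_2$ fact for arithmetic Dedekind rings, where the unit hypothesis is indispensable — for $\mathbb{Z}$, $\mathbb{Z}[\sqrt{-1}]$ and $\mathbb{Z}[\sqrt{-3}]$ one has $\E_2 \subsetneq \SL_2$ and the argument breaks down.
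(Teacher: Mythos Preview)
Your argument is correct and follows the same approach the paper intends: rerun the proof of Corollary~\ref{thm:trivial} using the sharper stability results available for arithmetic Dedekind rings with infinitely many units. The paper's own proof is the single sentence ``Follows from the corresponding stability results for Dedekind rings of arithmetic type with infinitely many units,'' so you have in fact supplied considerably more detail than the paper does. In particular, you correctly isolate that the $\K_2$-side input ($\K_{2,2}(A)\to\K_{2,3}(A)$ surjective) is already recorded in Theorem~\ref{thm:inj-sur-stability}(2), while the $\K_1$-side input for $n=3$ (controlling $\ker(\K_{1,2}(A)\to\K_{1,3}(A))$) is \emph{not} stated there and must be supplied by the theorem $\SL_2(A)=\E_2(A)$ of Vaserstein/Bass--Milnor--Serre; this is presumably what the paper means by ``corresponding stability results,'' but you make it explicit. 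One cosmetic point: from exactness of pointed sets you get $\mu^{-1}(\text{base point})=\{e\}$ rather than literal injectivity of $\mu$, but since you then show $\mathrm{image}(\mu)=\ker(\lambda)$ is the base point, the conclusion $\pi_1=\{e\}$ follows regardless.
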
 
\begin{proof} 
Follows from the corresponding stability results for Dedekind rings of 
arithmetic type with infinitely many units. 
\end{proof}

\noindent
{\bf Acknowledgement:} The second author acknowledges the financial support of DAAD 
(Deutscher Akademischer Austausch Dienst) when this work was in progress. 
She thanks Prof. Ravi Rao, TIFR, Mumbai for his support and also thanks 
Prof. Dilip Patil, IISc, Bengaluru for useful suggestions. 
She thanks her husband Shripad, for his constant and enthusiastic support and useful mathematical discussions. 
\vskip5mm 
She takes this opportunity to wish the first author on his seventieth birthday and hopes he has a healthy life ahead.


\begin{thebibliography}{10}
\bibitem{Bak} Bak, A.; Brown, R.; Minian, G.; Porter, T.
 Global actions, groupoid atlases and applications.  
J. Homotopy Relat. Struct.  1,  no. 1, 101--167 (electronic), 2006. 
\bibitem{Bass} Bass, H. Algebraic $K$-theory. W. A. Benjamin, Inc., 
New York-Amsterdam, 1968.
\bibitem{Bass1} Bass, H. K-theory and stable algebra. Publ. I.H.E.S., 22, 489--544, 1964. 
\bibitem{BMS} Bass, H., Milnor J., Serre J.-P., Solution of the congruence subgroup problem for ${\rm SL}_{n}$, $(n\geq 3)$ and ${\rm Sp}_{2n}$, $(n\geq 2)$.
Inst. Hautes {\'E}tudes Sci. Publ. Math. No. 33, 59--137, 1967. 
\bibitem{Mil} Milnor, J. Introduction to algebraic K-theory, Annals of Mathematics Studies, Princeton Universities Press.  
\bibitem{Rao-vdk} Rao, Ravi A., van der Kallen, W.; Improved stability for $SK_1$ and $WMS_d$ of a non-singular affine algebra. 
$K$-theory (Strasbourg, 1992).  Ast{\'e}risque  No. 226, 11, 411--420, 1994.   
\bibitem{Sus-Tu} Suslin A. A., Tulenbayev, M. S.; A theorem on stabilization 
for Milnor's 
$K_2$ functor. Rings and Modules, Zap. Nau. Sem. Leningrad. Otdel. Mat. Inst. Steklov. 
(LOMI) 64, 131--152 (Russian), 1976. 
\bibitem{SV} Suslin, A. A., Vaserstein, L. N.; Serre's problem on
Projective Modules over Polynomial Rings and Algebraic K-theory, Math.
USSR Izvestija 10, 937--1001, 1976.
\bibitem{Vas} Vaserstein, L. N.; On the stabilization of the general linear 
group over a ring, Mat. Sb. (N.S.), 79(121):3(7), 405--424, 1969. 
\bibitem{vdK1} van der Kallen, W.; A group structure on certain orbit 
sets of unimodular rows, J. Algebra  82, no. 2, 363--397, 1983.
\bibitem{vdK2} van der Kallen, W.; A module structure on certain orbit sets of
unimodular rows, J. Pure Appl. Algebra 57, 657--663, 1975.

\end{thebibliography}
\end{document}